\def\reg{\operatorname{reg}}
\def\deg{\operatorname{deg}}
\def\max{\operatorname{max}}
\def\v{\operatorname{v}}
\newcommand{\x}{\mathbf{x}}
\newcommand{\ZZ}{\mathbb{Z}}
\newcommand{\K}{\mathbb{K}}
\newcommand{\A}{\mathcal{A}}
\newtheorem{lemma}{Lemma}[section]
\newtheorem{theorem}[lemma]{Theorem}
\newtheorem{proposition}[lemma]{Proposition}
\theoremstyle{definition}
\newtheorem{definition}[lemma]{Definition}
\newtheorem{remark}[lemma]{Remark}
\newtheorem{example}[lemma]{Example}
\numberwithin{equation}{section}
\begin{document}
	
\pagenumbering{arabic}
	
\title[Comparing the v-number and h-polynomial]{Comparing 
the $\mathrm{v}$-number and $h$-polynomials of edge ideals}

\author[Kamalesh Saha]{Kamalesh Saha}
\address[K. Saha]{Department of Mathematics, SRM University-AP, Amaravati 522240, Andhra Pradesh, India}
\email{kamalesh.s@srmap.edu.in; kamalesh.saha44@gmail.com}

\author[Adam Van Tuyl]{Adam Van Tuyl}
\address[A. Van Tuyl]
{Department of Mathematics and Statistics\\
McMaster University, Hamilton, ON, L8S 4L8, CANADA}
\email{vantuyla@mcmaster.ca}

\keywords{v-number, degree of $h$-polynomial, 
Castelnuovo-Mumford regularity, edge ideal of a graph}
\subjclass[2020]{05E40, 13D02, 13D40, 13F55, 05C69}
	
\begin{abstract}
In this paper, we compare the $\mathrm{v}$-numbers and the degree of the $h$-polynomials associated with edge ideals of connected graphs. We prove that the $\mathrm{v}$-number can be arbitrarily larger or smaller than the degree of the $h$-polynomial for the edge ideal of a connected graph.
We also establish that for any pair of positive integers $(v,d)$ with $v \leq d$, there exists a connected graph $H(v,d)$ with the $\mathrm{v}$-number equal to $v$ and the degree of $h$-polynomial equal to $d$. Additionally, we show that the sum of the $\mathrm{v}$-number and the degree of the $h$-polynomial is bounded above by $n$, the number of vertices of $G$, and we classify all graphs for which this sum is exactly $n$. Finally, we show that all thirteen possible inequalities among the three invariants, the $\mathrm{v}$-number, the degree of the $h$-polynomial, and the Castelnuovo-Mumford regularity, can occur in the case of edge ideals of connected graphs. Many of these examples rely on a minimal example of a graph whose $\mathrm{v}$-number is more than the degree of its $h$-polynomial. Using a computer search, we show that there are exactly two such graphs on 11 vertices and 25 edges, and no smaller example on fewer vertices, or 11 vertices and less than 25 edges.
\end{abstract}

\maketitle 

\section{Introduction}
Let $R=\K[x_1,\ldots,x_n]=\bigoplus_{d\geq 0}R_d$ be a polynomial ring in $n$ variables over a field $\K$ with the standard gradation. Let $I$ be a proper graded ideal of $R$. Then any $\mathfrak{p}\in\mathrm{Ass}(I)$ is of the form $I:(f)$ for some homogeneous $f\in R$, where $\mathrm{Ass}(I)$ denotes the set of all associated primes of $I$. This fact allows
us to define the \textit{$\v$-number} of $I$ as follows:
\[
    \v(I) := \min\{ d \geq 0 \mid \text{ there exists }\, f \in R_d \text{ and } \mathfrak{p} \in \text{Ass}(I) \text{ satisfying } I :(f) = \mathfrak{p} \}.
\]

The $\v$-number was introduced in 2020 \cite{cstpv20} to study the asymptotic behavior of the minimum distance function of projective Reed-Muller-type codes. This invariant has since spurred considerable interest in commutative algebra, leading to several significant applications. There has been substantial progress in understanding this invariant across different contexts (see \cites{as24,ass23,bm23,bms24,civan23, concav23, djs25, fd24, fs23, fs24, gp25, grv21, v-edge, js24_v-binom, kmt25, kns25, sahacover23, ssvmon22, ksvgor23, saha24_binomexpan}).

A fundamental tool in commutative algebra is the Hilbert series,
which encodes information about the graded structure of a quotient ring. The {\it Hilbert series} of $R/I$, denoted by $H_{R/I}(t)$, is defined as:
\[
H_{R/I}(t) := \sum_{i \geq 0} \dim_{\mathbb{K}}(R/I)_i \, t^i.
\]
The Hilbert series can also be computed from the minimal free resolution of $R/I$, in particular, from its graded Betti numbers. By the Hilbert-Serre theorem, $H_{R/I}(t)$ can be written as a reduced rational function:
\[
H_{R/I}(t) = \frac{h_{R/I}(t)}{(1-t)^{\dim(R/I)}},
\]
where $h_{R/I}(t) \in \mathbb{Z}[t]$ and $h_{R/I}(1) \neq 0$,
and $\dim(R/I)$ is the Krull dimension. The polynomial $h_{R/I}(t)$ is referred to as the {\it $h$-polynomial} of $R/I$.

The degree of the $h$-polynomial has recently become a central area of interest, particularly in relation to other homological invariants. Since both the Castelnuovo-Mumford regularity (in short, regularity) and the degree of the $h$-polynomial are tied to the minimal graded free resolution, researchers have sought connections between these two invariants
for specific classes of ideals, for example, toric ideals of graphs
\cite{bvt2023, fkvt2020}, monomial ideals \cite{hm2018}, and
binomial edge ideals \cite{hm2022}.

Given a finite simple graph $G$, let $I(G)$ denote its edge ideal, a quadratic square-free monomial ideal whose generators correspond to the edges of $G$. The class of edge ideals of simple graphs is among the nicest classes of square-free monomial ideals. These ideals have been explored extensively over the last four decades. However, it has only been recently that the degree of the $h$-polynomial of $R/I(G)$ and how this invariant compares to other invariants has been studied. For example, a comparison between the degree of the $h$-polynomial and the regularity of edge ideals can be found in \cite{bkoss24, hkkmvt2021, hkmvt2022,hmv19}.

On the other hand, the $\v$-number has emerged as an invariant with many properties similar to the regularity, and for many families of edge ideals, it serves as a sharp lower bound for the regularity. However, as shown by Civan \cite{civan23}, for a connected graph $G$, $\v(I(G))$ can be arbitrarily larger than $\mathrm{reg}(R/I(G))$, although this phenomenon appears rare. This leads to the intriguing question of how $\v(I(G))$ compares to $\deg(h_{R/I(G)}(t))$, given that both invariants are currently of significant interest and share connections to the regularity.

This paper focuses on this question by exploring the relationship between $\v(I(G))$ and $\deg(h_{R/I(G)}(t))$ for connected graphs.  Our first main result establishes
that the difference between these two invariants
can be arbitrary:

\begin{theorem}[{Theorems \ref{Thm:d<v} and \ref{thm:v<=d}}]
For every integer $m \in \mathbb{Z}$, there exists a connected graph $G$ 
such that 
    $$\v(I(G))-\mathrm{deg}(h_{R/I(G)}(t))=m.$$
\end{theorem}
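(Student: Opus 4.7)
The plan is to handle the cases $m \leq 0$ and $m > 0$ separately, since the two possibilities $\v(I(G)) \leq \deg h_{R/I(G)}(t)$ and $\v(I(G)) > \deg h_{R/I(G)}(t)$ have very different characters for connected graphs, the latter being the rare one highlighted by the computer search in the abstract. The case $m = 0$ is absorbed into whichever side is more convenient (for example, $G = K_2$ realizes $\v = \deg h = 1$).

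\emph{Case $m \leq 0$.} For every nonnegative integer $c$, I would exhibit a connected graph $G$ with $\deg h_{R/I(G)}(t) - \v(I(G)) = c$. A natural approach is to start from a small base graph where both invariants are known (for instance, an edge or a short path) and then attach a combinatorial structure---most likely a long path, a tree of whiskers, or iterated pendant additions---engineered to increase $\deg h$ by exactly one at each step while leaving $\v$ unchanged. The invariants of the resulting graph would then be verified either through the short exact sequence $0 \to R/(I:f) \to R/I \to R/(I,f) \to 0$ applied to a well-chosen monomial $f$ (to recurse on the Hilbert series and hence on $\deg h$), or through combinatorial formulas for $\v$ that pin down the same associated prime before and after the attachment.

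\emph{Case $m > 0$.} This direction invokes the minimal connected counterexample graph $H$ promised by the paper, for which $c := \v(I(H)) - \deg h_{R/I(H)}(t) \geq 1$. To realize every positive $m$, I would build connected graphs $G_k$ from $k$ disjoint copies of $H$ (together with a small auxiliary piece for fine-tuning) and then connect them minimally, for instance by identifying a single carefully chosen vertex between successive copies or by inserting bridge edges. For the disjoint union $G_1 \sqcup G_2$ both invariants are additive: the Hilbert series factors, so $\deg h$ is additive, and a routine argument on associated primes shows $\v$ is additive as well. The task then reduces to checking that the chosen connecting operation adjusts both invariants in a transparent, controllable way, so that $\v(I(G_k)) - \deg h_{R/I(G_k)}(t)$ can be precisely dialed to any prescribed positive integer.

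The main obstacle is on the $m > 0$ side: while additivity under disjoint union is clean, enforcing connectivity by adding bridges or identifying vertices can shift $\v$ and $\deg h$ asymmetrically. The resolution would be either to choose a connecting operation for which both shifts provably cancel, or to combine several copies of $H$ with a supplemental base graph from the $m \leq 0$ construction to fine-tune the difference by integer amounts. For the $m \leq 0$ side, the milder but still nontrivial obstacle is selecting an attachment that increments $\deg h$ without disturbing the particular associated prime realizing $\v$; this typically forces a careful, vertex-by-vertex specification of where the new structure is glued to the base graph.
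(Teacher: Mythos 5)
Your overall skeleton---split into $m\le 0$ and $m>0$, handle the former by an explicit construction, and bootstrap the latter from a minimal connected graph with $\v(I(H))>\deg(h_{R/I(H)}(t))$---matches the paper's strategy, and your $m\le 0$ plan (attach pendant structure that raises $\deg h$ by one per step while leaving the associated prime realizing $\v$ untouched) is essentially the paper's Construction~3, which realizes every pair $(v,d)$ with $1\le v\le d$ using $v$ triangles joined in a star together with $d-v$ leaves hung on a single vertex. That half of your proposal is a reasonable, if unverified, sketch of what the paper actually does.

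The genuine gap is in the $m>0$ case. You correctly reduce to connecting $k$ copies of the base graph, but you leave the connecting operation unspecified and assert that a bridge edge or a vertex identification can be ``dialed'' so that the shifts in $\v$ and $\deg h$ cancel. This is precisely the hard part, and neither candidate operation is shown (or likely) to work: a single bridge or an identification perturbs the independence complex and the colon ideals non-additively, and there is no reason the two perturbations cancel. The paper's resolution is a more elaborate gadget: adjoin a clique on $n$ new vertices $y_1,\ldots,y_n$ and join each $y_i$ to a carefully chosen subset $A_i$ of the $i$-th copy, where the $A_i$ must satisfy numerical hypotheses ($\alpha(G_i)-\alpha(G_i\setminus A_i)=t_i$ odd, $\deg(G_i)-\deg(G_i\setminus A_i)=t_i-1$, $\v(I(G_i))\ge 1+\v(I(G_i\setminus A_i))$, and all relevant $h$-polynomials having leading coefficients of the same sign). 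These hypotheses are exactly what make the short exact sequence $0\to (R/(I(H_n):(y_n)))(-1)\to R/I(H_n)\to R/(I(H_n)+(y_n))\to 0$ computable: the parity and sign conditions guarantee the top-degree terms of the two numerators do not cancel, yielding $\deg(h_{R/I(H_n)}(t))=1+\sum_i\deg(h_{R_i/I(G_i)}(t))$, while a separate colon-ideal argument gives $\v(I(H_n))=\min_i\{1+\v(I(G_i\setminus A_i))+\sum_{j\ne i}\v(I(G_j))\}$. Note also that the gluing does \emph{not} preserve the per-copy difference---for $n$ copies of the $11$-vertex graph one gets $\v=3n$ and $\deg h=2n+1$, so the connection costs one unit---so even your qualitative expectation that ``the shifts cancel'' is not what happens; both invariants must be computed exactly. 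Without specifying such a gadget and verifying the non-cancellation of leading coefficients, the $m>0$ half of your argument does not close.
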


\noindent
Theorem \ref{Thm:d<v} deals with the case that $m$ is positive.
Our Theorem \ref{thm:v<=d} actually shows a stronger
result: for any pair of positive integers $(v,d)$ with
$1 \leq v \leq d$, there exists a connected graph
$G$ with $\v(I(G))=v$ and $\deg(h_{R/I(G)}(t)) =d$. Our
proof involves explicitly constructing a graph with these two invariants. 

The key challenge in this paper is to construct a connected graph $G$ for which $\v(I(G)) > \deg(h_{R/I(G)}(t)),$ and to show that 
the $\v$-number can be arbitrarily larger than the degree of $h$-polynomial in case of edge ideals of connected graphs. In \Cref{sec3}, we establish two constructive lemmas, \Cref{lem:deg} and \Cref{lem:v-num}, where we use $n$ disjoint graphs $G_1, \ldots, G_n$ to build a connected graph $H$. We express the $\v$-number, dimension, and degree of the $h$-polynomial of $H$ in terms of those of the graphs $G_i$. These lemmas may provide a useful tool for computing the $\v$-number and $h$-polynomial of larger graphs by breaking them into smaller pieces. By finding a ``base case" of a graph with  $\v(I(G))>\deg(h_{R/I(G)}(t))$, we can repeatedly apply these lemmas to make the difference between these two invariants as large as possible.  

For our ``base case", we make use of a graph on 11 vertices
and 25 edges that appeared in Jaramillo and Villarreal's paper \cite{v-edge}. This example was used to demonstrate that the $\v$-number could be larger than the regularity of an edge ideal. This
same graph also shows that the $\v$-number can be larger than 
$\deg(h_{R/I(G)}(t))$. Through an extensive computer search, we
show that this example is, in fact, one of two minimal graphs (in terms of the dictionary order on $(\vert V(G)\vert, \vert E(G)\vert)$) satisfying this inequality. 

In \Cref{sec4}, we investigate the sum of the $\v$-number and the degree of $h$-polynomials of edge ideals of graphs. The main theorem of this section is the following:

\begin{theorem}[{Theorem \ref{thm:sumvdeg}}]
Let $G$ be a simple graph on $n$ vertices. Then,
    $$\v(I(G))+\deg(h_{R/I(G)}(t))\leq n.$$
    Moreover, $\v(I(G))+\deg(h_{R/I(G)}(t))= n$ if and only if $G$ is a disjoint union of star graphs.
\end{theorem}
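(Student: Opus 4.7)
The plan is to prove the inequality by combining two complementary bounds whose sum is exactly $n$. First, $\deg(h_{R/I(G)}(t)) \leq \alpha(G) = \dim R/I(G)$: this follows from the Stanley--Reisner identity $h_{R/I(G)}(t) = \sum_{F \in \mathrm{Ind}(G)} t^{|F|}(1-t)^{\alpha(G) - |F|}$, each summand being a polynomial of degree at most $\alpha(G)$. Second, $\v(I(G)) \leq \tau(G)$, the vertex cover number: given a minimum vertex cover $C$ of $G$, for each $c \in C$ choose a neighbor $a_c \in V(G) \setminus C$ (which exists since $C$ is minimal); the set $A := \{a_c : c \in C\}$ is independent (as a subset of the independent set $V(G) \setminus C$), satisfies $N(A) = C$, and has $|A| \leq |C| = \tau(G)$, so $A$ is a $\v$-witness of size at most $\tau(G)$. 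Since $\alpha(G) + \tau(G) = n$, adding the two bounds gives $\v(I(G)) + \deg(h_{R/I(G)}(t)) \leq n$.

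Equality forces both bounds to be tight: $\v(I(G)) = \tau(G)$ and $\deg(h_{R/I(G)}(t)) = \alpha(G)$. Since all three quantities $\v$, $\deg(h)$, and $n$ are additive under disjoint unions (the Hilbert series factor, and $\v$-witnesses decompose across components), the problem reduces to characterizing connected graphs for which both equalities hold. For connected $G$ with $\v(I(G)) = \tau(G)$ and minimum vertex cover $C$: first, every $u \in V(G) \setminus C$ has exactly one neighbor in $C$---if $u$ had two $C$-neighbors $c_1, c_2$, then $\{u\} \cup \{a_c : c \in C \setminus \{c_1, c_2\}\}$ (with $a_c$ any neighbor of $c$ in $V(G) \setminus C$) would be a $\v$-witness of size $\tau - 1$, a contradiction. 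Hence $V(G) \setminus C$ partitions into leaf sets $L_c := N(c) \cap (V(G) \setminus C)$. Next, for any $c_1 \in C$, the set $C' := (C \setminus \{c_1\}) \cup L_{c_1}$ is a minimal vertex cover, and the witness $A := \{c_1\} \cup \{u_c : c \in C \setminus \{c_1\},\ c \notin N_{G[C]}(c_1)\}$ (with $u_c$ any element of $L_c$) for $C'$ has size $\tau - |N_{G[C]}(c_1)|$; requiring this to be $\geq \tau$ forces $N_{G[C]}(c_1) = \emptyset$ for every $c_1 \in C$. Therefore $G[C]$ is edgeless, $G$ decomposes as a disjoint union of stars centered at each $c \in C$ with leaf set $L_c$, and connectedness forces $|C| = 1$, so $G = K_{1, n-1}$, which satisfies $\deg(h_{R/I(G)}(t)) = n - 1 = \alpha(G)$ by direct computation.

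The main technical obstacle is the case analysis in the equality proof: constructing the alternative minimal vertex cover $C'$ and the specific witness whose size $\tau - |N_{G[C]}(c_1)|$ exposes the constraint $N_{G[C]}(c_1) = \emptyset$, together with verifying $C'$ is minimal and $N(A) = C'$. Combining the connected characterization with additivity, the equality case is exactly a disjoint union of stars $K_{1, a_i}$ with $a_i \geq 1$: an isolated-vertex component would contribute $0$ to $\v + \deg(h)$ but $1$ to $n$, strictly decreasing the sum and ruling such components out of any equality case.
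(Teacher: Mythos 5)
Your proposal is correct, and its overall skeleton coincides with the paper's: bound $\deg(h_{R/I(G)}(t))\leq \alpha(G)$ via Stanley's independent-set expansion of the Hilbert series, bound $\v(I(G))\leq \beta(G)$ (your $\tau(G)$), add them using $\alpha(G)+\beta(G)=n$, and reduce the equality case to characterizing when $\v(I(G))=\beta(G)$. Where you genuinely diverge is in how that key lemma is established. The paper quotes $\v(I(G))\leq\beta(G)$ from an external source and proves the equality characterization (its \Cref{thm:v<vcnumber}) by induction on $\beta(G)$, with a case analysis built on colon-ideal reductions of the form $\v(I(G))\leq \v(I(G):(x))+1$. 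You instead give a short, self-contained combinatorial argument: the witness $A=\{a_c : c\in C\}$ proves the inequality directly, and two further explicit witness constructions force, first, that every vertex outside a minimum cover $C$ has a unique neighbour in $C$, and second, via the alternative minimal cover $C'=(C\setminus\{c_1\})\cup L_{c_1}$, that $G[C]$ is edgeless — whence $G$ is a disjoint union of stars. I checked the two constructions: in each case $A$ is independent because it sits inside $V(G)\setminus C$ (or meets $C$ only in the isolated-in-$G[C]$ vertex $c_1$), and $N_G(A)$ equals the relevant minimal cover, so the size counts $\beta(G)-1$ and $\beta(G)-|N_{G[C]}(c_1)|$ do yield the stated contradictions. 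Your approach is more elementary and avoids both the induction and the external citations; the paper's inductive route has the side benefit of producing \Cref{thm:v<vcnumber} as a standalone statement. Your handling of isolated-vertex components (they add $1$ to $n$ but $0$ to $\v+\deg h$, so they are excluded from equality) is also a point the paper's proof glosses over by its standing non-emptiness assumption.
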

\noindent
This result complements similar results 
on the sum
${\rm reg}(I(G))+\deg(h_{R/I(G)}(t))$ due to
Hibi, Matsuda, and Van Tuyl 
\cite{hmv19} and Biermann, Kara, O'Keefe, Skelton, and Sosa \cite{bkoss24}.

In \Cref{sec5}, we compare three invariants: the $\v$-number, the degree of the $h$-polynomial, and the regularity for edge ideals of connected graphs. We show in Examples \ref{ex:5.1} -- \ref{ex:5.11} that all $13$ possible inequalities among these invariants can occur.  As appendices, we provide tables showing all possible tuples $(\v(I(G)),\deg(h_{R/I(G)}(t)))$ for connected graphs $G$ containing up to $10$ vertices,
 and we provide {\it Macaulay2} code for our two minimal examples of
 graphs whose $\v$-number is larger than the degree of its $h$-polynomial.


\section{Preliminaries}\label{secpreli}

In this section, we recall the relevant definitions and results from 
graph theory and commutative algebra that will be used throughout the paper.

\subsection{Graph Theory} Throughout this paper, every graph is considered to be finite, simple, and non-empty, i.e., a graph with a finite vertex set that has no loops or multiple edges between vertices and also has a non-empty edge set. 
In particular, let $G=(V(G),E(G))$ denote a finite simple graph with
{\it vertex set} $V(G)$ and {\it edge set} $E(G)$. 

A subset $W$ of $V(G)$ is called an {\it independent set} of $G$ if there 
are no edges among the vertices of $W$. An independent set $W$ of $G$ is 
a {\it maximal independent set} if $W \cup \{x\}$ is not an independent 
subset of $G$ for every vertex $x \in V(G) \setminus W$. The maximum size 
among all the independent sets of $G$ is said to be the 
{\it independence number} of $G$ and is denoted by $\alpha(G)$. 
A subset of vertices $C \subseteq V(G)$ is a {\it vertex cover} of $G$ 
if $C\cap e\neq \emptyset$ for all $e\in E(G)$. A {\it minimal vertex cover} is a vertex cover which is minimal with respect to set inclusion. 
The minimum size among all the vertex covers of $G$ is the
{\it vertex covering number} of $G$ and is denoted by $\beta(G)$. 
From the definitions, it follows that $A\subseteq V(G)$ is a maximal 
independent set of $G$ if and only if $V(G)\setminus A$ is a minimal vertex 
cover of $G$. Hence, $\alpha(G)+\beta(G)=\vert V(G)\vert$. 

For $A\subseteq V(G)$, the {\it neighbour set} of $A$ in $G$ is defined
to be 
$$N_{G}(A):=\{x\in V(G)\mid \{x,y\}\in E(G) \text{ for some } y\in A\}.$$ 
Observe that if $A$ is a maximal independent set of $G$, then $N_{G}
(A)=V(G)\setminus A$, which is a minimal vertex cover of $G$.

For a subset $W \subseteq V(G)$, the {\it induced subgraph} of $G$
on $W$ is the graph $G_W = (W,E(G_W))$ where $E(G_W) = \{e \in E(G) ~|~ 
e \subseteq W \}$. In other words, $G_W$ is the subgraph of $G$ obtained
by only considering the edges with vertices in $W$. For $A\subseteq V(G)$, we write $G\setminus A$ to mean the induced subgraph $G_{V(G)\setminus A}$. We now recall some special families of graphs used in this paper.

\begin{definition}
    A graph on $n$ vertices is a \textit{cycle} of length $n$, denoted by $C_n$, if it is connected and every vertex has exactly two neighbours. A graph $G$ is  \textit{chordal} if $G$ has no induced cycle of length more than three. A \textit{complete} graph on $n$ vertices, denoted by $K_n$, is a graph such that there is an edge between every pair of vertices.
\end{definition}

\subsection{Commutative Algebra} The definition of the $\v$-number and the degree of the $h$-polynomial were given in the introduction. We now
define the (Castelnuovo-Mumford) regularity, the third invariant of
interest.

\begin{definition}
    If $I\subseteq R$ is a homogeneous ideal, then the graded minimal free resolution of $R/I$ is a long exact sequence of finitely generated free $R$-modules
\[
\mathcal{F}_{\cdot}: \,\, 0\rightarrow F_r\xrightarrow{\partial_{r}} 
F_{r-1}\rightarrow\cdots\rightarrow F_1\xrightarrow{\partial_1} 
F_0\xrightarrow{\partial_0} R/I\rightarrow 0, 
\]
where $F_0=R$, $F_i=\bigoplus_{j\in\mathbb N}R(-j)^{\beta_{i,j}(R/I)}$, and 
$\partial_i(F_i)\subseteq (x_1,\ldots,x_n)F_{i-1}$ for all $i\geq 1$. 
The numbers $\beta_{i,j}(R/I)$ are uniquely determined and are called 
the $(i,j)^{th}$ {\it graded Betti numbers} of $R/I$. The 
{\it Castelnuovo-Mumford regularity} (or simply the 
{\it regularity}) of $R/I$ is the number
$\reg(R/I):= \max\{j-i\mid \beta_{i,j}(R/I)\neq 0\}.$
\end{definition}

The results of the next lemma, which follows from \cite[Lemma 1.5, Lemma 3.2]{ht10}, and \cite[Proposition 3.9]{ssvmon22}, are well-known; we sometimes apply these facts without reference.

\begin{lemma}\label{lem:reg-deg-v}
        Let $I_1\subseteq R_1=\mathbb K[x_1,\ldots,x_m]$ and 
        $I_2\subseteq R_2=\mathbb K[x_{m+1},\ldots, x_n]$ be two graded ideals. Consider the ideal 
        $I=I_1R+I_2R\subseteq R=\mathbb K[x_1,\ldots,x_n]$. Then 
        \begin{enumerate}
            \item[(i)] $\reg(R/I)=\reg(R_1/I_1)+\reg(R_2/I_2)$;
            \item[(ii)] $H_{R/I}(t)=H_{R_1/I_1}(t)\cdot H_{R_2/I_2}(t)$;
            \item[(iii)] $\deg(h_{R/I}(t))=
            \deg(h_{R_1/I_1}(t))+\deg(h_{R_2/I_2}(t))$; and
            \item[(iv)] $\v(I)=\v(I_1)+\v(I_2)$ if $I_1$ and $I_2$ are monomial ideals.
        \end{enumerate}
\end{lemma}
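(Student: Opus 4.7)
The plan is to exploit the structural isomorphism $R/I \cong R_1/I_1 \otimes_{\mathbb{K}} R_2/I_2$ as graded $\mathbb{K}$-algebras, which follows because $R = R_1 \otimes_{\mathbb{K}} R_2$ (the variable sets being disjoint) and under this identification $I = I_1R + I_2R$ is sent to $(I_1 \otimes R_2) + (R_1 \otimes I_2)$. Each of the four parts then reduces to a standard consequence of this tensor decomposition.

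For (ii), the graded piece in degree $d$ decomposes as
\[
(R/I)_d \;\cong\; \bigoplus_{i+j=d}\,(R_1/I_1)_i \otimes_{\mathbb{K}} (R_2/I_2)_j,
\]
so $\dim_{\mathbb{K}}(R/I)_d$ is the convolution of the dimensions, which translates into the product of Hilbert series. For (iii), I would combine (ii) with the identity $\dim(R/I) = \dim(R_1/I_1) + \dim(R_2/I_2)$ (tensor product of finitely generated $\mathbb{K}$-algebras) to conclude $h_{R/I}(t) = h_{R_1/I_1}(t)\cdot h_{R_2/I_2}(t)$; the degree of a product of nonzero polynomials is the sum of the degrees, which finishes (iii).

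For (i), I would build a minimal graded free resolution of $R/I$ by tensoring minimal graded free resolutions $F_\bullet \to R_1/I_1$ and $G_\bullet \to R_2/I_2$ over $\mathbb{K}$ and extending scalars to $R$. The resulting complex has terms $\bigoplus_{i_1+i_2=i,\, j_1+j_2=j} R(-j)^{\beta_{i_1,j_1}(R_1/I_1)\cdot \beta_{i_2,j_2}(R_2/I_2)}$, and it remains minimal because the differential on the tensor product inherits entries from the original differentials, which lie in the respective maximal ideals. From the resulting Künneth-type identity for graded Betti numbers, $\reg(R/I) = \max (j-i) = \max((j_1-i_1)+(j_2-i_2))$ decouples into a sum of independent maxima, giving the desired additivity.

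For (iv), the key reduction is that for a monomial ideal $J$, the $\v$-number is attained by a monomial witness $f$: if $J : f = \mathfrak{p}$ is associated, then $\mathfrak{p}$ is itself a monomial prime, and among the monomial summands of $f$ at least one still yields $J : m = \mathfrak{p}$, of degree $\leq \deg(f)$. Given such a monomial witness $m$ for $I$, I can factor $m = m_1 m_2$ with $m_j \in R_j$; since the variable sets are disjoint, $I : m = (I_1 : m_1)R + (I_2 : m_2)R$, and this is a prime exactly when each factor $I_j : m_j$ is a prime in $R_j$. The associated primes of $I$ are therefore precisely the sums $\mathfrak{p}_1 R + \mathfrak{p}_2 R$ with $\mathfrak{p}_j \in \mathrm{Ass}(I_j)$, so $\v(I) \leq \v(I_1) + \v(I_2)$ by multiplying optimal witnesses, and equality follows from the factorization above. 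The main obstacle is precisely this last step for (iv): one must verify both that the minimum is actually achieved by a product monomial and that no ``mixed'' witness can do better, which is exactly what the colon-factorization over disjoint variable sets rules out.
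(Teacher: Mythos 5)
Your proposal is correct. Note, however, that the paper does not actually prove this lemma: it declares the four statements ``well-known'' and defers to \cite{ht10}*{Lemmas 1.5, 3.2} for (i)--(iii) and to \cite{ssvmon22}*{Proposition 3.9} for (iv). What you have written is essentially a reconstruction of the standard arguments behind those citations: the identification $R/I \cong R_1/I_1 \otimes_{\mathbb K} R_2/I_2$, the Künneth-type convolution of graded pieces for (ii), the tensor product of minimal free resolutions (minimal because the differential $d_F\otimes 1 \pm 1\otimes d_G$ still has entries in the maximal ideal) for (i), and the reduction to monomial witnesses for (iv). Two small points deserve slightly more care than your sketch gives them. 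In (iii), to conclude that $h_{R/I}=h_{R_1/I_1}\cdot h_{R_2/I_2}$ you should note that the product $h_{R_1/I_1}(t)h_{R_2/I_2}(t)/(1-t)^{\dim_1+\dim_2}$ is already in reduced form because $h_{R_1/I_1}(1)h_{R_2/I_2}(1)\neq 0$; the dimension additivity then comes for free rather than being a separate input. In (iv), the assertion that some monomial summand $m_i$ of a homogeneous witness $f$ satisfies $J:m_i=\mathfrak p$ needs the following argument: one checks $\mathfrak p \subseteq J:m_i$ for every $i$ and $\bigcap_i (J:m_i)=J:f=\mathfrak p$, and if every containment $\mathfrak p\subseteq J:m_i$ were strict one could pick monomials $u_i\in (J:m_i)\setminus\mathfrak p$ whose product lies in $\mathfrak p$, contradicting primeness. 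With that filled in, your colon-factorization $I:m=(I_1:m_1)R+(I_2:m_2)R$ over disjoint variable sets correctly yields both inequalities, and the fact that $I:f$ prime automatically forces $I:f\in\operatorname{Ass}(I)$ closes the argument.
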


For a set of variables $A$ in $R$, we write $\mathbf{x}_{A}$ to denote the product of the variables in $A$, i.e., $\mathbf{x}_{A}=\prod_{x_i\in A} x_i$ is a square-free monomial in $R$.

\begin{definition}
    Let $G$ be a graph on the vertex set $V(G)=\{x_1,\ldots,x_n\}$. 
    The \textit{edge ideal} of $G$ is the square-free quadratic monomial ideal of $R$ defined by $I(G):=(\mathbf{x}_{e} \mid e\in E(G)).$
\end{definition}

Jaramillo and Villarreal \cite{v-edge} gave a combinatorial description 
of the $\v$-number for square-free monomial ideals. Since our paper is restricted to the edge ideals of graphs, we rewrite some elementary 
results on the $\v$-number from \cite{v-edge} in graph theory
language. Let $\A_{G}$ be the collection of those independent sets $A$ of $G$ such that $N_{G}(A)$ is a minimal vertex cover of $G$. Note that if 
$A$ is an independent set of $G$ such that $N_{G}(A)$ is a vertex cover 
of $G$, then $N_{G}(A)$ is in fact a minimal vertex cover of $G$. 
The results in the lemma below follow directly from 
\cite[Lemma 3.4 and Theorem 3.5]{v-edge}.  In the statement, 
we abuse notation and write $(N_G(A))$ for the
ideal generated by the vertices of $N_G(A)$, but viewed
as variables in $R$, i.e., $(N_G(A)) = (x_i ~|~ x_i \in N_G(A))$.

\begin{lemma}
Let $G$ be a simple graph. Then the following hold:
\begin{enumerate}
    \item[(i)] if $A\in\A_{G}$, then $I(G):(\x_{A})=(N_G(A))$;
    \item[(ii)] if $I(G):(f)=\mathfrak{p}$ for some 
    $f\in R_{d}$ and $\mathfrak{p}\in 
    \mathrm{Ass}(I(G))$, then there exists $A\in\A_{G}$ such that $\vert 
    A\vert\leq d$ and $I(G):(\x_{A})=(N_G(A))=\mathfrak{p}$; and 
    \item[(iii)] $\v(I(G))=\min\{\vert A\vert\mid A\in \A_G\}$.
\end{enumerate}
\end{lemma}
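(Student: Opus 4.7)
The plan is to prove (i) by a direct colon computation, to derive (ii) by reducing an arbitrary witness $f$ to a squarefree monomial, and to deduce (iii) from (i) and (ii).

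For (i), fix $A \in \A_G$. The inclusion $(N_G(A)) \subseteq I(G):(\x_A)$ is immediate: each $x_k \in N_G(A)$ has some neighbour $x_j \in A$, so $x_k\x_A$ is divisible by the edge $x_kx_j \in I(G)$. For the reverse inclusion, take any monomial $u$ with $u\x_A \in I(G)$; some edge $\{x_i,x_j\} \in E(G)$ divides $u\x_A$. Since $A$ is independent, this edge cannot lie entirely in $A$. If exactly one endpoint lies in $A$, then the other endpoint lies in $N_G(A)$ and divides $u$. If both endpoints lie outside $A$, then $x_ix_j \mid u$, and since $N_G(A)$ is a vertex cover (as $A \in \A_G$), at least one of $x_i,x_j$ belongs to $N_G(A)$. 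In either case $u \in (N_G(A))$.

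For (ii), I first reduce the witness $f$ to a squarefree monomial. Expanding $f = \sum_i c_i m_i$ into distinct monomials of degree $d$, a short check using that both $I(G)$ and the prime $\mathfrak{p} = I(G):(f)$ are monomial ideals yields $\mathfrak{p} = \bigcap_i I(G):(m_i)$; primality then forces $\mathfrak{p} = I(G):(m)$ for some monomial $m$ of $f$, of degree $d$. Since $I(G)$ is squarefree, $I(G):(x_j^a g) = I(G):(x_j g)$ for every $a \geq 1$, so replacing each power of a variable in $m$ by a single occurrence produces a squarefree monomial $\x_B$ (with $B$ the set of variables dividing $m$) satisfying $I(G):(\x_B) = \mathfrak{p}$ and $|B| \leq d$. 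The set $B$ must be independent in $G$, for otherwise $\x_B \in I(G)$ and the colon would be the unit ideal. A computation parallel to (i) then yields
\[
I(G):(\x_B) = (N_G(B)) + I(G \setminus (B \cup N_G(B))),
\]
and for this ideal to equal the monomial prime $\mathfrak{p}$ the second summand must vanish, i.e., $N_G(B)$ is a vertex cover of $G$. Since $B$ is independent, this cover is automatically minimal (as noted in the preceding paragraph of the excerpt), so $A := B$ lies in $\A_G$ and satisfies $I(G):(\x_A) = (N_G(A)) = \mathfrak{p}$.

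Part (iii) is then immediate: for any $A \in \A_G$, part (i) exhibits $(N_G(A)) = I(G):(\x_A)$ as an associated prime of $I(G)$ witnessed by the monomial $\x_A \in R_{|A|}$, whence $\v(I(G)) \leq |A|$; conversely, applying (ii) to a degree-minimal witness $(f,\mathfrak{p})$ of $\v(I(G))$ produces some $A \in \A_G$ with $|A| \leq \v(I(G))$. The main obstacle lies in step (ii): cleanly passing from a general homogeneous $f$ to a squarefree monomial $\x_B$ without losing the prime colon or inflating the degree, and then extracting the vertex-cover property of $N_G(B)$ from the primality of the resulting colon ideal.
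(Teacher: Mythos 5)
Your proposal is correct. Note, however, that the paper does not prove this lemma at all: it simply records the statements as direct consequences of Lemma~3.4 and Theorem~3.5 of Jaramillo--Villarreal \cite{v-edge}. What you have written is essentially a self-contained reconstruction of that cited argument: your colon computation $I(G):(\x_B)=(N_G(B))+I\bigl(G\setminus(B\cup N_G(B))\bigr)$ for an independent set $B$ is precisely the content of their Lemma~3.4, and your reduction of a homogeneous witness $f$ to a squarefree monomial (pass to the monomial ideals $I(G):(m_i)$, use that a monomial prime equal to a finite intersection of ideals containing it must equal one of them, then strip exponents using squarefreeness of $I(G)$) is the standard route to their Theorem~3.5. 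All the individual steps check out: the no-cancellation argument showing $x_k f\in I(G)$ forces $x_k m_i\in I(G)$ for every monomial $m_i$ of $f$ is valid because the $x_km_i$ are distinct; the minimality of the vertex cover $N_G(B)$ for independent $B$ is the observation already recorded in the paper; and part~(iii) follows since $I(G):(\x_A)$ being prime makes it an associated prime by definition. So the only difference from the paper is that you supply the proof the paper outsources.
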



\section{The v-number and degree of the h-polynomial}\label{sec3}

In this section, our goal is to compare the $\v$-number and the 
degree of the $h$-polynomial for edge ideals of graphs. 
In particular, we demonstrate that for any integer $m \in \mathbb{Z}$,
there is a (connected) graph $G$ such that $\v(I(G)) - \deg (h_{R/I(G}(t)) = m$. We tackle this problem in two stages. We first describe how
to construct a graph $G$ with $\v(I(G)) - \deg h_{R/I(G)}(t) = m$
with $m \geq 1$.  We then focus on showing the existence
of a graph with $\deg (h_{R/I(G)}(t)) 
- \v(I(G)) = m$ with $m\geq 0$.    

The first stage is the more subtle of the two. Civan \cite{civan23} showed
that to compare the $\v$-number and the regularity of edge ideals, the most challenging aspect was to establish that the 
$\v$-number can be arbitrarily larger than the regularity. 
To show the existence of such a graph, Civan required a base case involving
a graph constructed from a triangulation of the dunce cap
consisting of 17 vertices to show $\v(I(G)) 
-\reg(R/I(G)) =1$, while a difference of 
$m$ required a graph with $18(m+1)$ vertices for $m\geq 2$. A similar challenge arises in our comparison -- we require
a graph $G$ with
$\v(I(G)) - \deg (h_{R/I(G)}(t)) = 1$ to act as our ``base case'' 
from which we build our other examples. 

While Civan's result required a graph on 17 vertices,  
Jaramillo and Villarreal \cite{v-edge} showed
the existence of a graph $G$ on 11 vertices and 25 edges
for which $\v(I(G)) > \reg(R/I(G))$, 
provided the characteristic of $\mathbb{K}$ is zero (the two values are the 
same if ${\rm char}(\mathbb{K}) =2$). This graph $G$ is drawn in 
\Cref{fig:11vertex}.  Surprisingly, this graph also has the
property that we require since $\v(I(G)) = 3$ and $\deg(h_{R/I(G)}(t)) = 2$, as checked using {\it Macaulay2}. Since the degree of the $h$-polynomial and the $\v$-number of edge ideals both do not depend on the characteristic of the field, Jaramillo and Villarreal's example also provides us with a characteristic-free example. Using the graph of 
\Cref{fig:11vertex}, we will construct a family with $\v(I(G)) - \deg (h_{R/I(G)}(t)) = m$ for any integer $m \geq 1$. 
In fact, as we show in Section 5, we can use the graph of Figure
\ref{fig:11vertex} to compare the v-number, the degree of the $h$-polynomial, and the regularity.

\begin{figure}[ht]
    \centering
\begin{tikzpicture}[line cap=round,line join=round,>=triangle 45,x=1.5cm,y=1cm]

\draw (0,0)-- (0,2);
\draw (-0.5,1)-- (0,0);
\draw (-0.5,1)-- (0,2);
\draw (2,0)-- (2,2);
\draw (2.5,1)-- (2,0);
\draw (2.5,1)-- (2,2);
\draw (0,0)-- (2,0);
\draw (0,2)-- (2,2);
\draw (-0.5,1)-- (2.5,1);
\draw (-0.5,-1.5)-- (0.5,-1.5);
\draw (1.5,-1.5)-- (2.5,-1.5);
\draw (-0.5,-1.5)-- (-0.5,1);
\draw (-0.5,-1.5)-- (2.5,1);
\draw (-0.5,-1.5)-- (0,0);
\draw (0.5,-1.5)-- (0,2);
\draw (0.5,-1.5)-- (2,2);
\draw (1.5,-1.5)-- (0,0);
\draw (1.5,-1.5)-- (2,0);
\draw (2.5,-1.5)-- (-0.5,1);
\draw (2.5,-1.5)-- (2.5,1);
\draw (2.5,-1.5)-- (0,2);
\draw (1.5,-1.5)-- (1,-3);
\draw (0.5,-1.5)-- (1,-3);
\draw (-0.5,-1.5)-- (1,-3);
\draw (2.5,-1.5)-- (1,-3);

\draw (0,0) node[anchor=east] {$x_6$};
\draw (-0.5,1) node[anchor=east] {$x_{8}$};
\draw (0,2) node[anchor=south] {$x_{10}$};
\draw (2,0) node[anchor=west] {$x_7$};
\draw (2,2) node[anchor=south] {$x_{11}$};
\draw (2.5,1) node[anchor=west] {$x_{9}$};
\draw (-0.5,-1.5) node[anchor=east] {$x_{4}$};
\draw (0.5,-1.5) node[anchor=west] {$x_{3}$};
\draw (1.5,-1.5) node[anchor=east] {$x_{1}$};
\draw (2.5,-1.5) node[anchor=west] {$x_{2}$};
\draw (1,-3) node[anchor=north] {$x_{5}$};

\begin{scriptsize}
\draw [fill=black] (0,0) circle (1.5pt);
\draw [fill=black] (-0.5,1) circle (1.5pt);
\draw [fill=black] (2,0) circle (1.5pt);
\draw [fill=black] (0,2) circle (1.5pt);
\draw [fill=black] (2,2) circle (1.5pt);
\draw [fill=black] (2.5,1) circle (1.5pt);
\draw [fill=black] (-0.5,-1.5) circle (1.5pt);
\draw [fill=black] (0.5,-1.5) circle (1.5pt);
\draw [fill=black] (1.5,-1.5) circle (1.5pt);
\draw [fill=black] (2.5,-1.5) circle (1.5pt);
\draw [fill=black] (1,-3) circle (1.5pt);
\end{scriptsize}
\end{tikzpicture}
\caption{ 
The first graph $G$ on 11 vertices and 25 edges
with $\deg(h_{R/I(G)}(t)) = 2$
and $\v(I(G))=3$.}
    \label{fig:11vertex}
\end{figure}
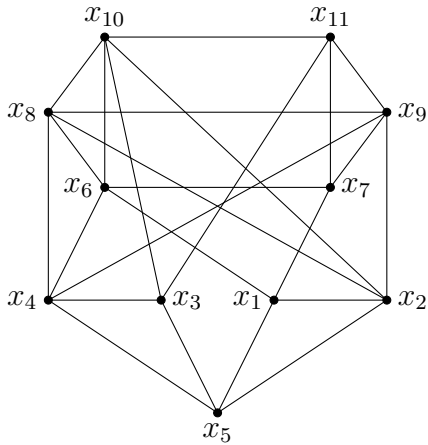

Before pressing forward, it is natural to ask if there is a ``smaller"
base case available. Amazingly, an extensive computer search has shown
that the example of \Cref{fig:11vertex} {\it is} one of the 
two smallest examples. We used {\it Macaulay2} to check
all graphs up to 11 vertices and 25 edges to produce the next statement.
Our computation involved checking well over 100,000,000 graphs (there are
86,318,670 non-isomorphic graphs on 11 vertices and 25 edges alone).

\begin{theorem}
    If $G$ is a graph with $|V(G)| \leq 10$, or
    $|V(G)| = 11$ and $|E(G)| \leq 24$, then $\v(I(G)) \leq \deg (h_{R/I(G)}(t))$.  If $G$ is a graph
    with $|V(G)|=11$ and $|E(G)|=25$, then $\v(I(G)) \leq \deg (h_{R/I(G)}(t))$,
    {\bf except} if $G$ is the graph of \Cref{fig:11vertex} 
    or \Cref{fig:secondminex}.  In both
    these two exceptional cases, $\deg(h_{R/I(G)}(t)) = 2
    < 3 = \v(I(G))$.
\end{theorem}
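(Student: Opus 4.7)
The plan is to prove this theorem by exhaustive computer search, but with reductions that make the search tractable. First, by \Cref{lem:reg-deg-v}(iii) and (iv), both $\v(I(G))$ and $\deg(h_{R/I(G)}(t))$ are additive under the disjoint union of graphs (viewed on disjoint vertex sets). Hence, if every connected component $G_i$ of $G$ satisfies $\v(I(G_i)) \le \deg(h_{R/I(G_i)}(t))$, then the same inequality holds for $G$. Therefore it suffices to verify the claim for \emph{connected} graphs on at most 11 vertices, and to identify the exceptions among connected graphs on 11 vertices with 25 edges.

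Next, I would enumerate, using a standard tool such as \texttt{nauty}'s \texttt{geng}, all non-isomorphic connected graphs on $n$ vertices with $m$ edges, for $3 \le n \le 11$ and $n-1 \le m \le \binom{n}{2}$, subject to the cutoff $m \le 25$ when $n = 11$ (and skipping $(n,m)$-pairs already known to lie in the exhaustive range). For each such graph, compute $\v(I(G))$ purely combinatorially using the description of \Cref{lem:reg-deg-v}'s preceding lemma: $\v(I(G)) = \min\{|A| : A \in \A_G\}$, where $\A_G$ consists of independent sets $A$ whose neighborhood $N_G(A)$ is a minimal vertex cover. This computation can be done by iterating over independent sets $A$ in order of increasing size and testing the minimality of $N_G(A)$ as a vertex cover; as soon as one is found, $\v(I(G)) = |A|$. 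Then compute $\deg(h_{R/I(G)}(t))$ via the Hilbert series of $R/I(G)$ using \emph{Macaulay2} (or from the graded Betti numbers). Record every graph for which $\v(I(G)) > \deg(h_{R/I(G)}(t))$.

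The principal obstacle is the sheer size of the search, dominated by the 86{,}318{,}670 non-isomorphic graphs on 11 vertices and 25 edges, of which the connected ones still number in the tens of millions. To make this feasible, I would apply two optimizations. First, prune by the $\v$-number: compute $\v(I(G))$ first (which is cheap since minimum independent sets $A$ with $N_G(A)$ a vertex cover tend to be small), and only invoke the Hilbert-series computation when $\v(I(G)) \ge 2$, since every graph has $\deg(h_{R/I(G)}(t)) \ge 1$. Second, use the early-termination optimization on the combinatorial $\v$-number search: once an independent set $A$ of size $d$ with $N_G(A)$ a minimal vertex cover is found, stop. A further check on the intermediate values $|V(G)| \le 10$ and on $|V(G)| = 11$ with $|E(G)| \le 24$ should reveal no exceptions.

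Finally, for $|V(G)| = 11$ and $|E(G)| = 25$, the enumeration should yield exactly two exceptional graphs. For these, verify directly (using the \emph{Macaulay2} code provided in the appendix) that $\deg(h_{R/I(G)}(t)) = 2$ and $\v(I(G)) = 3$, and confirm by canonical labeling (\texttt{nauty}) that they are isomorphic respectively to the graphs depicted in \Cref{fig:11vertex} and \Cref{fig:secondminex}. The main subtlety in trusting such a proof is reproducibility: the conclusion should be independently verifiable by anyone rerunning the enumeration, so I would document the exact software versions and scripts used, and include the adjacency data of the two minimal examples in the appendix.
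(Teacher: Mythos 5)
Your proposal is correct and matches the paper's approach: the theorem is established by an exhaustive computer enumeration of graphs up to $11$ vertices and $25$ edges, computing $\v(I(G))$ combinatorially via $\A_G$ and $\deg(h_{R/I(G)}(t))$ from the Hilbert series in \emph{Macaulay2}, and isolating the two exceptional graphs. Your reduction to connected graphs via the additivity in \Cref{lem:reg-deg-v} and your pruning heuristics are sound refinements of the same computation, not a different argument.
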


\begin{figure}[ht]
    \centering
\begin{tikzpicture}[scale=0.9,line cap=round,line join=round,>=triangle 45,x=1.5cm,y=1.5cm]
\coordinate (1) at (-0.5,1);
\coordinate (2) at (-1.5,-0);
\coordinate (3) at (2,1);
\coordinate (4) at (2,-0.5);
\coordinate (5) at (0.5,-0);
\coordinate (6) at (-1.5,-1);
\coordinate (7) at (3,-0.5);
\coordinate (8) at (-0.5,2);
\coordinate (9) at (0.5,-1);
\coordinate (10) at (2,2);
\coordinate (11) at (1,-2.5);

\draw (1) -- (4);
\draw (1) -- (5);
\draw (2) -- (5);
\draw (2) -- (6);
\draw (3) -- (6);
\draw (3) -- (7);
\draw (4) -- (7);
\draw (1) -- (8);
\draw (2) -- (8);
\draw (4) -- (8);
\draw (6) -- (8);
\draw (1) -- (9);
\draw (3) -- (9);
\draw (5) -- (9);
\draw (6) -- (9);
\draw (2) -- (10);
\draw (3) -- (10);
\draw (5) -- (10);
\draw (7) -- (10);
\draw (2) -- (11);
\draw (4) -- (11);
\draw (5) -- (11);
\draw (6) -- (11);
\draw (7) -- (11);
\draw (9) -- (11);

\node at (1) [above right] {$x_1$};
\node at (2) [left] {$x_2$};
\node at (3) [above left] {$x_3$};
\node at (4) [left] {$x_4$};
\node at (5) [right] {$x_5$};
\node at (6) [left] {$x_6$};
\node at (7) [right] {$x_7$};
\node at (8) [above] {$x_8$};
\node at (9) [below left] {$x_9$};
\node at (10) [above] {$x_{10}$};
\node at (11) [below] {$x_{11}$};

\foreach \i in {1,...,11}
  \fill (\i) circle (2pt);

\end{tikzpicture}
\caption{The second graph $G$ on 11 vertices and 25
edges with $\deg(h_{R/I(G)}(t)) = 2$
and $\v(I(G))=3$.} \label{fig:secondminex}
\end{figure}
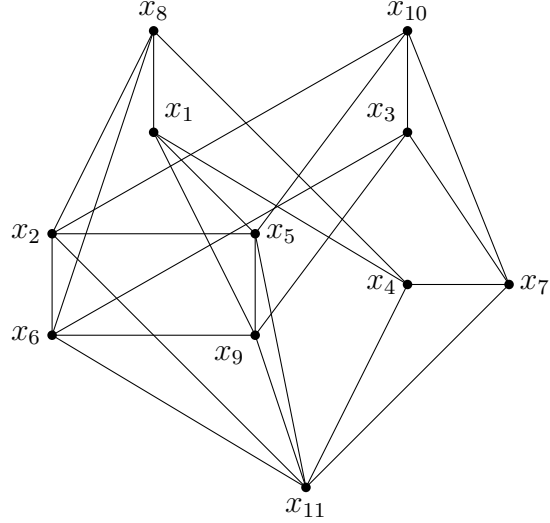
To demonstrate that the $\v$-number can be arbitrarily
larger than the degree of the $h$-polynomial, we describe a special construction of a graph built from a collection of graphs. We will then apply this construction to $n$ copies of the graph $G$ of \Cref{fig:11vertex}.

\noindent \textbf{Construction 1.} 
Let $G_1,\ldots,G_n$ be $n$ simple graphs. For each 
$1\leq i\leq n$, let $A_i\subseteq V(G_i)$ be such that
$\alpha(G_i)-\alpha(G_i\setminus A_i)=t_i\geq 1$ 
is odd and $\deg(G_i)-\deg(G_i\setminus A_i)=t_i-1$, 
where $\deg(G_i):=\deg(h_{R_i/I(G_i)}(t))$ with 
$R_i=\mathbb{K}[V(G_i)]$. Moreover, we assume that all the 
leading coefficients of the $h$-polynomials of 
$G_1,\ldots,G_n, G_1\setminus A_1,\ldots,G_n\setminus A_n$ are of the same sign, i.e., either all positive or all negative. Using the graphs $G_1,\ldots,G_n$ and the subsets of vertices $A_1,\ldots,A_n$, we construct a new graph $H_n$ as follows:
\begin{enumerate}
    \item[$\bullet$] $V(H_n)=V(G_1)\cup\cdots\cup V(G_n)\cup\{y_1,\ldots,y_n\}$.
    \item[$\bullet$] $E(H_n)=E(G_1)\cup\cdots\cup E(G_n)\cup\{\{y_{i},x\}\mid x\in A_i, i\in [n]\}\cup \{\{y_i,y_j\}\mid i\neq j\}$.
\end{enumerate}

\begin{lemma}\label{lem:deg}
    Let $H_n$ be the graph as in Construction 1 and $R=\mathbb{K}[V(H_n)]$. Then
    $$\dim(R/I(H_n))=\sum_{i=1}^{n}\alpha(G_i) \text{ and } \deg(h_{R/I(H_n)}(t))=1 + \sum_{i=1}^{n}\deg(h_{R_{i}/I(G_i)}(t)).$$
\end{lemma}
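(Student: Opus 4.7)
The plan is to prove the dimension and degree claims in turn. For the dimension, since $\dim(R/I(H_n)) = \alpha(H_n)$, it suffices to verify $\alpha(H_n) = \sum_{i=1}^n \alpha(G_i)$. Using that $\{y_1,\ldots,y_n\}$ induces a clique in $H_n$, any independent set $W$ of $H_n$ contains at most one $y_i$; and if $y_i \in W$, then $W \cap V(G_i) \subseteq V(G_i)\setminus A_i$, whence
$$|W| \leq 1 + \alpha(G_i\setminus A_i) + \sum_{j\neq i}\alpha(G_j) = 1 - t_i + \sum_j \alpha(G_j) \leq \sum_j \alpha(G_j)$$
since $t_i \geq 1$. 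Taking a maximum independent set in each $G_i$ (and no $y_i$) realizes this bound, proving the dimension claim.

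For the $h$-polynomial, write $h_i(t) := h_{R_i/I(G_i)}(t)$ and $h_i'(t) := h_{R_i'/I(G_i\setminus A_i)}(t)$, where $R_i' := \mathbb{K}[V(G_i)\setminus A_i]$. I would compute $H_{R/I(H_n)}(t)$ directly by separating the monomials of $R/I(H_n)$ according to which (at most one) of $y_1,\ldots,y_n$ appears in their support. Since monomials involving $y_i$ must avoid both $A_i$ and the other $y_j$'s, applying Lemma \ref{lem:reg-deg-v}(ii) to each piece gives
$$H_{R/I(H_n)}(t) = \prod_{i=1}^n H_{R_i/I(G_i)}(t) + \sum_{i=1}^n \frac{t}{1-t}\,H_{R_i'/I(G_i\setminus A_i)}(t) \prod_{j\neq i} H_{R_j/I(G_j)}(t).$$
Substituting the Hilbert--Serre form of each factor and using $\alpha(G_i) - \alpha(G_i\setminus A_i) = t_i$, this simplifies over the common denominator $(1-t)^{\sum_i \alpha(G_i)}$ to numerator
$$p(t) = \prod_{i=1}^n h_i(t) + \sum_{i=1}^n t(1-t)^{t_i - 1} h_i'(t) \prod_{j\neq i} h_j(t).$$
By the dimension claim, the denominator already equals $(1-t)^{\dim(R/I(H_n))}$, so $p(t) = h_{R/I(H_n)}(t)$.

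A routine degree count (using $\deg h_i' = \deg h_i - (t_i - 1)$) shows that each summand $t(1-t)^{t_i-1} h_i'(t) \prod_{j\neq i} h_j(t)$ has degree exactly $1 + \sum_j \deg h_j$, strictly exceeding $\deg \prod_i h_i = \sum_j \deg h_j$. The main (and essentially only) obstacle is preventing cancellation among the leading coefficients of these $n$ summands. This is where the parity and sign hypotheses enter: since $t_i$ is odd, $(-1)^{t_i - 1} = 1$, so the leading coefficient of the $i$-th summand equals the product of the leading coefficients of $h_i'$ and of $h_j$ for $j \neq i$. By the common-sign hypothesis, these $n$ leading coefficients share a common sign, so they add without cancellation. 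This yields $\deg p(t) = 1 + \sum_{i=1}^n \deg h_{R_i/I(G_i)}(t)$, as required.
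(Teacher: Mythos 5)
Your proof is correct, and for the degree formula it takes a genuinely different route from the paper. Your dimension argument is essentially the paper's (at most one $y_i$ lies in an independent set, and if $y_i$ is present then $t_i\geq 1$ absorbs it), but for the $h$-polynomial the paper proceeds by induction on $n$, peeling off one vertex $y_n$ at a time via the short exact sequence $0\to (R/I(H_n):(y_n))(-1)\to R/I(H_n)\to R/(I(H_n)+(y_n))\to 0$ and tracking, at each stage, that the leading coefficients of the two numerators share a sign. You instead decompose the Hilbert series in one step, partitioning the standard monomials (equivalently, monomials supported on independent sets) according to which of the $y_i$, if any, appears, which yields the closed formula
\[
h_{R/I(H_n)}(t)=\prod_{i=1}^n h_i(t)+\sum_{i=1}^n t(1-t)^{t_i-1}h_i'(t)\prod_{j\neq i}h_j(t)
\]
over the denominator $(1-t)^{\sum_i\alpha(G_i)}$; your observation that this denominator already matches $(1-t)^{\dim(R/I(H_n))}$ correctly identifies the numerator as the $h$-polynomial. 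The sign and parity hypotheses of Construction~1 enter in exactly the same way in both arguments --- $t_i$ odd makes the $(1-t)^{t_i-1}$ factor contribute a positive leading coefficient, and the common-sign hypothesis prevents cancellation among the $n$ top-degree summands --- but your version makes the no-cancellation step more transparent, since all $n$ potentially cancelling terms are visible at once rather than being buried in an inductive hypothesis about the sign of $c(H_{n-1})$. The trade-off is that the paper's inductive setup reuses machinery (the colon and sum ideals $I(H_n):(y_n)$ and $I(H_n)+(y_n)$) that is also needed elsewhere, e.g.\ in the regularity computations of Example~\ref{ex:5.10}.
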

\begin{proof}
    For simplicity, we use the notation introduced in 
    Construction 1. We first prove the dimension formula. 
    
    Let $A$ be a maximal independent set of $H_n$ such that $\alpha(H_n)=\vert A\vert$, i.e., $\dim(R/I(H_n))=\vert A\vert$. Since there is no edge from $G_i$ to $G_j$ ($i\neq j$), a union of independent sets of $G_1,\ldots,G_n$ will also be an independent set in $H_n$. Thus, $\vert A\vert \geq \sum_{i=1}^{n}\alpha(G_i)$. 
    
    If $A\cap \{y_1,\ldots,y_n\}=\emptyset$, 
    then $\vert A\cap V(G_i)\vert\leq\alpha(G_i)$ for 
    each $1\leq i\leq n$. In this case, 
    $A\cap V(G_1),\ldots,A\cap V(G_n)$ forms
    a partition of $A$, and thus, 
    $\vert A\vert\leq\sum_{i=1}^{n}\alpha(G_i)$, thus
    giving the desired conclusion. On the
    other hand, suppose 
    $A\cap \{y_1,\ldots,y_n\}\neq \emptyset$. 
    Since the induced subgraph on $\{y_1,\ldots,y_n\}$ is
    a complete graph, and since $A$ is an independent set, we have $\vert A\cap\{y_1,\ldots,y_n\}\vert=1$. 
    Without loss of generality, we assume $y_1\in A$. 
    Since $\{y_1,x\}\in E(H_n)$ for every $x\in A_1$, 
    $A\cap V(G_1)=A\cap V(G_1\setminus A_1)$. Note that 
    $\vert (A\cap V(G_1))\cup\{y_1\}\vert =\vert A\cap 
    V(G_1\setminus A_1)\vert+1\leq
    \alpha(G_1\setminus A_1)+1\leq \alpha(G_1)$ 
    by our construction. In this case, the sets 
    $(A\cap V(G_1))\cup\{y_1\}, A\cap V(G_2),\ldots,
    A\cap V(G_n)$ forms a partition of $A$. Therefore, 
    $\vert A\vert\leq \sum_{i=1}^{n}\alpha(G_i)$, which 
    gives the desired conclusion in this case.  The
    formula for the dimension thus holds.

    We now prove the formula for the degree of the
    $h$-polynomial.  Recall that we write
    $\deg(G)$ for $\deg(h_{R/I(G)}(t))$. We also write $c(G)$ for the leading coefficient of  $h_{R/I(G)}(t)$. We now prove by 
    induction that $\deg(H_n)=1+\sum_{i=1}^{n}\deg(G_i)$ and the sign of $c(H_n)$ is positive (or negative)  if $c(G_i)$ and $c(G_i\setminus A_i)$ are all positive (or negative) for all $1\leq i\leq n$.

    For $n=1$, consider the following short exact sequence:
    $$0\rightarrow (R/I(H_1):(y_1))(-1)
    \stackrel{\times y_1}{\longrightarrow} R/I(H_1)\rightarrow R/(I(H_1)+(y_1))\rightarrow 0.$$
    Note that $I(H_1):(y_1)=(A_1)+I(G_1\setminus A_1)$ and $I(H_1)+(y_1)=(y_1)+I(G_1)$. Therefore, by the additivity of the Hilbert series on the above short exact sequence, we get
     \begin{align*}
        H_{R/I(H_1)}(t)&=t\cdot H_{R/I(H_1):(y_1)}(t)+H_{R/(I(H_1)+(y_1))}(t)\\
        &=\frac{t\cdot h_{R/I(G_1\setminus A_1)}(t)}{(1-t)^{1+\alpha(G_1\setminus A_1)}}+\frac{h_{R/I(G_1)}(t)}{(1-t)^{\alpha(G_1)}}\\
        &=\frac{t\cdot (1-t)^{t_1-1}\cdot h_{R/I(G_1\setminus A_1)}(t)+h_{R/I(G_1)}(t)}{(1-t)^{\alpha(G_1)}}.
    \end{align*}
    For the last equality, we are using the
    fact that $1 + \alpha(G_1\setminus A_1) +(t_1-1) =\alpha(G_1)$
    from our construction. Now $\deg(G_1\setminus A_1) +
    t_1 = \deg(G_1)+1$, by our construction's assumption.
    Therefore, from the above equation, we have 
    $\deg(H_1)=\deg(G_1)+1$. Since $t_1-1$ is even, 
    the leading coefficient of $(1-t)^{t_1-1}$ is positive
    (in fact, it is $1$), and so we 
    have $c(H_1)=c(G_1\setminus A_1)$. Hence, 
    the base case of the induction is true. 
    
    Now, for any $n\geq 2$, we consider the following 
    short exact sequence:
    \begin{equation}\label{eq1}
        0\rightarrow (R/I(H_n):(y_n))(-1)
        \stackrel{\times y_n}{\longrightarrow} R/I(H_n)\rightarrow R/(I(H_n)+(y_n))\rightarrow 0.
    \end{equation}
    Note that $$I(H_n):(y_n)=(y_1,\ldots,y_{n-1})+
    (A_n)+I(G_n\setminus A_n)+I(G_1)+\cdots+I(G_{n-1}).$$ 
    Then we have 
    $$\dim(R/I(H_n):(y_n))=1+
    \alpha(G_n\setminus A_n)+\sum_{i=1}^{n-1}\alpha(G_i)=
    1-t_n+\sum_{i=1}^{n}\alpha(G_i).$$ 
    By \Cref{lem:reg-deg-v} (iii) and by the conditions 
    of the construction
    $$\deg(h_{R/I(H_n):(y_n)}(t))=
    1-t_n+\sum_{i=1}^{n}\deg(G_i).$$ 
    Now, consider the ideal 
    $I(H_n)+(y_n)=I(H_{n-1})+I(G_n)+(y_n)$. 
    Then, we have 
    $\mathrm{dim}(R/(I(H_n)+(y_n))) = \sum_{i=1}^{n}\alpha(G_i)$. 
    Also, by the induction hypothesis and \Cref{lem:reg-deg-v} (iii), it follows that 
    $$\deg(h_{R/(I(H_n)+(y_n))}(t))=1+\sum_{i=1}^{n}\deg(G_i).$$
    Using the additivity of the Hilbert series 
    on the short exact sequence \eqref{eq1}, we get
     \begin{align*}
        H_{R/I(H_n)}(t)&=t\cdot H_{R/I(H_n):(y_n)}(t)+H_{R/(I(H_n)+(y_n))}(t)\\
        &=\frac{t\cdot h_{R/I(H_n):(y_n)}(t)}{(1-t)^{1-t_n+\sum_{i=1}^{n}\alpha(G_i))}}+\frac{h_{R/(I(H_n)+(y_n))}(t)}{(1-t)^{\sum_{i=1}^{n}\alpha(G_i)}}\\
        &=\frac{t\cdot (1-t)^{t_n-1}\cdot h_{R/I(H_n):(y_n)}(t)+h_{R/(I(H_n)+(y_n))}(t)}{(1-t)^{\sum_{i=1}^{n}\alpha(G_i)}}.
    \end{align*}

    By the given condition on the sign of the leading 
    coefficient of the $h$-polynomials, we can verify that 
    the sign of the leading coefficients of both 
    $h_{R/I(H_n):(y_n)}(t)$ and $h_{R/(I(H_n)+(y_n))}(t)$ 
    are the same (either positive or negative). Also, it 
    is given that $t_n-1$ is even. Therefore, using the 
    above equation and the formulas of the $h$-polynomials 
    of $R/I(H_n):(y_n$) and $R/(I(H_n)+(y_n))$, we obtain 
    $\deg(H_n)=1+\sum_{i=1}^{n}\deg(G_i)$ (the fact that the leading coefficients have the same sign ensures that the top coefficient is not canceled, so that the degree of the $h$-polynomial can be computed from 
    the displayed equation above). The claim for 
    $c(H_n)$ follows immediately. This now completes
    the induction proof.
\end{proof}

\noindent 
\textbf{Construction 2.} Let $G_1,\ldots,G_n$ be $n$ simple non-empty graphs. For each $1\leq i\leq n$, let $A_i\subseteq V(G_i)$ be such that $\v(I(G_i))\geq 1+\v(I(G_i\setminus A_i))$. Using the graphs $G_1,\ldots,G_n$ and the subsets of vertices $A_1,\ldots,A_n$, we construct a new graph $H_n$ as follows:
\begin{enumerate}
    \item[$\bullet$] $V(H_n)=V(G_1)\cup\cdots\cup V(G_n)\cup\{y_1,\ldots,y_n\}$.
    \item[$\bullet$] $E(H_n)=E(G_1)\cup\cdots\cup E(G_n)\cup\{\{y_{i},x\}\mid x\in A_i, i\in [n]\}\cup \{\{y_i,y_j\}\mid i\neq j\}$.
\end{enumerate}

\begin{lemma}\label{lem:v-num}
    Let $H_n$ be the graph as in Construction 2. Then for $n\geq 2$
    $$\v(I(H_n))=\min_{1\leq i\leq n} \left\{1+\v(I(G_i\setminus A_i))+\sum_{j\in [n]\setminus \{i\}}\v(I(G_j))\right\}.$$
\end{lemma}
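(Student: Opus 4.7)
The plan is to invoke the combinatorial description
$$\v(I(H_n))=\min\{|A|\mid A\in\A_{H_n}\}$$
from the lemma at the end of the preliminaries, and then identify the smallest $A\in\A_{H_n}$ by a case analysis. Since $\{y_1,\ldots,y_n\}$ induces a complete subgraph of $H_n$, any independent set $A$ of $H_n$ meets $\{y_1,\ldots,y_n\}$ in at most one vertex, so the analysis splits into two cases: (i) exactly one $y_k$ lies in $A$, and (ii) $A\cap\{y_1,\ldots,y_n\}=\emptyset$.

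For case (i), I would write $A=A^*\cup\{y_k\}$ with $A^*_i:=A^*\cap V(G_i)$. Independence of $A$ forces $A^*_k\subseteq V(G_k)\setminus A_k$, and a direct computation of neighborhoods in $H_n$ yields
$$N_{H_n}(A)=\bigcup_{i=1}^{n}N_{G_i}(A^*_i)\;\cup\; A_k\;\cup\;\{y_j\mid j\neq k\}.$$
I would then check that this set is a vertex cover of $H_n$ if and only if (a) $A^*_i\in\A_{G_i}$ for every $i\neq k$, so that the edges of each $G_i$ with $i\neq k$ are covered, and (b) $A^*_k\in\A_{G_k\setminus A_k}$, since edges of $G_k$ incident to $A_k$ are covered by $A_k$ itself, while the remaining edges---exactly the edges of $G_k\setminus A_k$---must be covered by $N_{G_k}(A^*_k)\setminus A_k=N_{G_k\setminus A_k}(A^*_k)$. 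This yields the lower bound
$$|A|\geq 1+\v(I(G_k\setminus A_k))+\sum_{j\neq k}\v(I(G_j)),$$
which is realized by choosing minimum-size witnesses for the $\v$-numbers on each component (the resulting set is independent in $H_n$ because the components live on disjoint vertex sets and $y_k$ is non-adjacent to them by construction). Minimizing over $k$ produces the right-hand side of the claimed formula.

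For case (ii), covering the edges of the clique on $\{y_1,\ldots,y_n\}$ forces at least $n-1$ of the $y_i$'s to lie in $N_{H_n}(A)$, and since each $G_i$ is non-empty, covering its edges forces $A\cap V(G_i)\in\A_{G_i}$, so $|A|\geq\sum_{i=1}^{n}\v(I(G_i))$. Invoking the Construction~2 hypothesis $\v(I(G_k))\geq 1+\v(I(G_k\setminus A_k))$ gives, for every $k$,
$$1+\v(I(G_k\setminus A_k))+\sum_{j\neq k}\v(I(G_j))\leq\sum_{i=1}^{n}\v(I(G_i)),$$
so the case-(i) minimum is no larger than the case-(ii) lower bound, and the formula follows. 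The main piece of technical care lies in the equivalence (b) above: one must notice that any vertex of $N_{G_k}(A^*_k)\cap A_k$ is already in $A_k$ and so contributes nothing new to the cover, which is exactly what allows the mixed condition ``$N_{G_k}(A^*_k)\cup A_k$ covers $G_k$'' to be replaced by the clean condition $A^*_k\in\A_{G_k\setminus A_k}$.
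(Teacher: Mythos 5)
Your argument is correct, and it agrees with the paper's proof in the upper bound (take a vertex $y_i$ together with minimum witnesses in $G_i\setminus A_i$ and in each $G_j$, $j\neq i$) and in the case where the witness set misses $\{y_1,\ldots,y_n\}$ (reduce to $|A|\geq\sum_i\v(I(G_i))$ and then apply the Construction~2 hypothesis $\v(I(G_k))\geq 1+\v(I(G_k\setminus A_k))$). Where you genuinely diverge is the remaining case, when the witness set contains some $y_k$. The paper handles this algebraically: it computes the colon ideal $I(H_n):(y_k)=(A_k)+(y_j\mid j\neq k)+I(G_k\setminus A_k)+\sum_{j\neq k}I(G_j)$, applies the additivity of the $\v$-number over disjoint sets of variables, and then invokes the inequality $\v(I)\geq 1+\v(I:(y_k))$ (valid here since $\v(I(H_n))>1$), citing the proof of Proposition 3.12(c) in Jaramillo--Villarreal. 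You instead prove a purely combinatorial structure result: a set $A=A^*\cup\{y_k\}$ lies in $\A_{H_n}$ exactly when $A^*\cap V(G_j)\in\A_{G_j}$ for $j\neq k$ and $A^*\cap V(G_k)\in\A_{G_k\setminus A_k}$, the key observation being that vertices of $N_{G_k}(A^*\cap V(G_k))$ lying in $A_k$ are already supplied to the cover by $N_{H_n}(y_k)$. Your route is longer but self-contained given the combinatorial description of $\v$ from the preliminaries, and it yields slightly more information (a full description of the elements of $\A_{H_n}$ containing a $y_k$); the paper's route is shorter but outsources the key inequality to an external result on colon ideals. Both are valid; just make sure in your write-up to record the convention $\v((0))=0$ for the case where some $G_k\setminus A_k$ has no edges, which the paper also uses implicitly.
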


\begin{proof}
    Fix any $i\in [n]$. For each 
    $j\in [n]\setminus \{i\}$, choose an 
    independent set $B_j$ of $G_j$ such that 
    $\vert B_j\vert=\v(I(G_j))$ and $N_{G_j}(B_j)$ is a 
    minimal vertex cover of $G_j$, i.e.,  
    $I(G_j):(\mathbf{x}_{B_j}) = (N_{G_j}(B_j))$. 
    Again, choose an independent set $B_i$ of 
    $G_i\setminus A_i$ such that 
    $N_{G_i\setminus A_i}(B_i)$ is a minimal vertex cover 
    of $G_i\setminus A_i$ with $\vert B_i\vert=\v(I(G_i\setminus A_i))$. 
    
    We now take $B=\{y_i\}\cup B_1\cup\cdots\cup B_n$. 
    It follows from Construction 2  that in the graph $H_n$
    the set $B$ is an independent set of $H_n$, and 
    $N_{H_n}(B_n)$ is a minimal vertex cover of $H_n$. 
    In particular, $I(H_n):(\mathbf{x}_{B})=(N_{H_n}(B))$, and thus, we have 
    $$\v(I(H_n))\leq 1+\v(I(G_i\setminus A_i))+\sum_{j\in [n]\setminus \{i\}}\v(I(G_j)).$$
    Since $i$ was chosen arbitrarily from $[n]$, it follows that 
    $$\v(I(H_n))\leq \min_{1\leq i\leq n} \left\{1+\v(I(G_i\setminus A_i))+\sum_{j\in [n]\setminus \{i\}}\v(I(G_j))\right\}.$$
    
    For the reverse inequality, suppose $B'$ is an 
    independent set of $H_n$ such that $N_{H_n}(B')$ is a minimal vertex cover of $H_n$ and 
    $\vert B'\vert=\v(I(H_n))$. 
    If $B'\cap \{y_1,\ldots,y_n\}=\emptyset$, then 
    $\vert B'\cap V(G_i)\vert\geq \v(I(G_i))$ for each 
    $1\leq i\leq n$ because 
    $B'\cap V(G_i)$ is an independent 
    set of $G_i$ and $N_{G_i}(B'\cap V(G_i))$ is a vertex 
    cover of $G_i$. Since it is given that 
    $\v(I(G_i))\geq 1+\v(I(G_i\setminus A_i))$ for each 
    $i\in [n]$, we have $$\vert B'\vert \geq \min_{1\leq i\leq n} \left\{1+\v(I(G_i\setminus A_i))+\sum_{j\in [n]\setminus \{i\}}\v(I(G_j))\right\},$$
    thus finishing the proof in this case. 
    
    Now, let us assume 
    $A\cap \{y_1,\ldots,y_n\}\neq \emptyset$. 
    Then $\vert B'\cap \{y_1,\ldots,y_n\}\vert=1$ because 
    the induced graph
    on $\{y_1,\ldots,y_n\}$ is a complete graph and 
    $B'$ is an independent set. Without loss of 
    generality, we may assume $B'\cap\{y_1,\ldots,y_n\}=\{y_1\}$. 
    Now, consider the ideal $I(H_n):(y_1)=(A_1)+ (y_2,\ldots,y_n)+I(G_1\setminus A_1)+I(G_2)+\cdots+I(G_n)$.
    By Lemma \ref{lem:reg-deg-v} (iv) 
    we have $$\v(I(H_n):(y_1))=\v(I(G_1\setminus A_1))+\sum_{j=2}^{n}\v(I(G_j)).$$ Because 
    each $G_i$ is a non-empty graph, we have $\v(I(H_n))>1$. Thus, from the proof of \cite[Proposition 3.12(c)]{v-edge} (or \cite[Proposition 3.13(iii)]{ssvmon22}) it follows that $$\v(I(H_n))\geq 1+\v(I(H_n):(y_1))=1+\v(I(G_1\setminus A_1))+\sum_{j=2}^{n}\v(I(G_j)).$$ Consequently, we have 
    $$\v(I(H_n))\geq \min_{1\leq i\leq n} \left\{1+\v(I(G_i\setminus A_i))+\sum_{j\in [n]\setminus \{i\}}\v(I(G_j))\right\}.$$
    This completes the proof.
    \end{proof}

The previous lemmas now allow us to prove the first major result of this section.

\begin{theorem}\label{Thm:d<v}
    For every positive integer $n$, there exists a connected graph $H$ such that 
    $$\mathrm{v}(I(H))-\mathrm{deg}(h_{R/I(H)}(t))=n.$$
\end{theorem}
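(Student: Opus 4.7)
The plan is to iterate the constructions behind \Cref{lem:deg} and \Cref{lem:v-num}, using the graph $G$ of \Cref{fig:11vertex} as the base case. Since $G$ already has $\v(I(G)) = 3$ and $\deg(h_{R/I(G)}(t)) = 2$, gluing $n+1$ copies of $G$ together will amplify the one-unit gap into a gap of exactly $n$: each copy contributes $3$ to the $\v$-number and $2$ to the degree of the $h$-polynomial, plus a single ``$+1$'' from the gluing vertex.

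The preparatory step, and the genuine obstacle, is to identify a subset $A \subseteq V(G)$ of the $11$-vertex base graph that simultaneously satisfies the hypotheses of Construction~1 and Construction~2 with parameter $t = 1$, namely:
\begin{enumerate}
\item[(a)] $\alpha(G) - \alpha(G \setminus A) = 1$ (odd, as required);
\item[(b)] $\deg(h_{R/I(G \setminus A)}(t)) = 2 = \deg(h_{R/I(G)}(t))$, so the difference matches $t-1 = 0$;
\item[(c)] $\v(I(G \setminus A)) = 2$, so that $\v(I(G)) \geq 1 + \v(I(G \setminus A))$ holds with equality; and
\item[(d)] the leading coefficients of $h_{R/I(G)}(t)$ and $h_{R/I(G \setminus A)}(t)$ have the same sign.
\end{enumerate}
The existence of such an $A$ (for instance, a suitably chosen single vertex or small vertex subset) is verified by a direct \textit{Macaulay2} computation. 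This is the main obstacle, since (a)--(d) constrain $\alpha$, $\deg h$, the sign of a leading coefficient, and $\v$ all at once, and the right $A$ cannot be predicted from obvious combinatorial features.

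With $A$ in hand, given a positive integer $n$, set $k = n+1$, take $G_1 = \cdots = G_k = G$ and $A_1 = \cdots = A_k = A$, and let $H = H_k$ be the graph produced by the construction. Then $H$ is connected: each $G_i$ is connected, the vertices $y_1, \ldots, y_k$ induce the clique $K_k$ (with $k \geq 2$), and each $y_i$ is joined to $G_i$ through the non-empty set $A_i$. Applying \Cref{lem:deg} yields
\[
\deg(h_{R/I(H)}(t)) = 1 + k \cdot 2 = 2n + 3,
\]
and applying \Cref{lem:v-num} (valid since $k \geq 2$), together with condition (c), yields
\[
\v(I(H)) = 1 + \v(I(G \setminus A)) + (k-1)\cdot \v(I(G)) = 1 + 2 + 3n = 3n + 3.
\]
Subtracting gives $\v(I(H)) - \deg(h_{R/I(H)}(t)) = n$, as required. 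Once the existence of $A$ is settled by computation, every other step is a formal consequence of the two structural lemmas, so the entire argument amounts to a single computer check on an $11$-vertex graph followed by bookkeeping.
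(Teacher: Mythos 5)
Your proposal is correct and follows essentially the same route as the paper: the paper's proof also takes $n+1$ copies of the graph of \Cref{fig:11vertex}, glues them via a clique on the $y_i$'s with $A_i = \{x_1^{(i)},\ldots,x_5^{(i)}\}$, verifies conditions (a)--(d) by a \textit{Macaulay2} computation of the reduced Hilbert series and $\v$-numbers, and then applies \Cref{lem:deg} and \Cref{lem:v-num} to get $\v = 3(n+1)$ and $\deg = 2(n+1)+1$. The only difference is that you leave the subset $A$ unspecified, whereas the paper exhibits it explicitly as $A = \{x_1,\ldots,x_5\}$ (so the existential claim in your step is in fact settled).
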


\begin{proof}
    For every positive integer $n$, let us construct a new 
    graph $H_n$ using $n$ copies of the graph $G$ given 
    in \Cref{fig:11vertex} and a complete graph $K_n$ 
    on $n$ vertices. Let $G_{i}$ be the graph isomorphic 
    to $G$ with $V(G_i)=\{x^{(i)}_1,\ldots, x^{(i)}_{11}\}$
    and using the isomorphism $x^{(i)}_j$ goes to $x_j$. 
    Now we define $H_n$ as follows:
    \begin{enumerate}
    \item[$\bullet$] $V(H_n)=V(G_1)\cup\cdots\cup V(G_n)\cup\{y_1,\ldots,y_n\}$, and
    \item[$\bullet$] $E(H_n)=E(G_1)\cup\cdots\cup E(G_n)\cup\{\{y_{i},x^{(i)}_j\}\mid j\in[5], i\in [n]\}\cup \{\{y_i,y_j\}\mid i\neq j\}$.
\end{enumerate}
Observe that we are attaching each $y_i$ to the five
vertices $\{x_1^{(i)},x_2^{(i)},x_3^{(i)},x_4^{(i)},x_5^{(i)}\}$ 
in each copy of $G$ in \Cref{fig:11vertex}.

Note that for the edge ideal of a graph, the degree of 
the corresponding $h$-polynomial and the $\v$-number 
does not depend on the characteristic of the base field.
If we let $A = \{x_1,\ldots,x_5\}$,
then  one can check using {\it Macaulay2} that 
the reduced Hilbert series of $\mathbb{K}[V(G)]/I(G)$ and 
$\mathbb{K}[V(G\setminus A)]/I(G\setminus A)$
are given by
$$H_{\mathbb{K}[V(G)]/I(G)}(t) = \frac{1+8t+11t^2}{(1-t)^3}
~~\mbox{and}~~
H_{\mathbb{K}[V(G\setminus A)]/I(G\setminus A)}(t) 
=\frac{1+4t+t^2}{(1-t)^2}.$$
Thus  $\alpha(G)=3$, $\alpha(G\setminus A)=2$, 
$\deg(G)=2$, 
$\deg(G\setminus A)=2$,
and also the leading coefficients of
the $h$-polynomial of 
$\mathbb{K}[V(G)]/I(G)$ and $\mathbb{K}[V(G\setminus A)]/I(G\setminus A)$ have the same
sign. We also use {\it Macaulay2} to 
compute
$\v(I(G))=3$ 
and $\v(I(G\setminus A))=2$. 
Therefore, by considering 
$A_i = \{x^{(i)}_1,\ldots,x^{(i)}_5\}$, we have $\alpha(G_i)-\alpha(G_i\setminus A_i)=1$, $\deg(G_i)-\deg(G_i\setminus A_i)=0=1-1$, and $3=\v(I(G_i))\geq 1+\v(I(G_i\setminus A_i))=3$. 

Hence, the graph $H_n$ satisfies the conditions given in both Construction 1 and Construction 2. 
Thus, by \Cref{lem:deg} and \Cref{lem:v-num}, it follows that
$$\v(I(H_n))=3n \text{ and } \deg(H_n)=1+2n.$$
Thus for  any integer $n \geq 1$,
 $H=H_{n+1}$ satisfies $\mathrm{v}(I(H))-\mathrm{deg}(h_{R/I(H)}(t))=n$.
    \end{proof}

For the next main result, we need to know the Hilbert series of complete graphs and star graphs. A 
{\it star graph} on $n$ vertices, denoted by $K_{1,n-1}$, is a graph in which, after relabeling the vertices, we have $V(K_{1,n-1})=\{x_1,\ldots,x_n\}$ and $E(K_{1,n-1})=\{\{x_1,x_i\}\mid 2\leq i\leq n\}$. The Hilbert series of edge ideals for complete and star graphs are likely well-established or can be inferred as direct consequences of earlier works. However, due to the absence of a suitable reference, we include 
a short proof.

\begin{proposition}\label{prop:hilb-complete-star}
    The Hilbert series of the edge ideals of complete and star graphs are:
    \begin{enumerate}
    \item[(i)] $H_{R/I(K_n)}(t)=\frac{1+(n-1)t}{(1-t)}$, and
    \item[(ii)] $H_{R/I(K_{1,n-1})}(t)=\frac{1+t(1-t)^{n-2}}{(1-t)^{n-1}}$.
\end{enumerate}
\end{proposition}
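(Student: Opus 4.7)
The plan is to handle the two parts separately, using a direct standard-monomial count for the complete graph and a short exact sequence argument for the star graph.

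For part (i), I would start from the observation that $I(K_n)$ contains every product $x_i x_j$ with $i\neq j$. Consequently, any monomial in $R$ that does not lie in $I(K_n)$ is supported on at most one variable, so a $\mathbb{K}$-basis for $R/I(K_n)$ is
\[
\{1\}\,\cup\,\{x_i^d \mid 1\le i\le n,\ d\ge 1\}.
\]
This gives $\dim_{\mathbb{K}}(R/I(K_n))_0 = 1$ and $\dim_{\mathbb{K}}(R/I(K_n))_d = n$ for $d\ge 1$. Summing the resulting geometric series yields
\[
H_{R/I(K_n)}(t) = 1 + n\sum_{d\ge 1} t^d = 1 + \frac{nt}{1-t} = \frac{1+(n-1)t}{1-t},
\]
as claimed.

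For part (ii), I would write $x_1$ for the central vertex of $K_{1,n-1}$ and apply the short exact sequence
\[
0 \to \bigl(R/(I(K_{1,n-1}):x_1)\bigr)(-1) \xrightarrow{\,\cdot x_1\,} R/I(K_{1,n-1}) \to R/(I(K_{1,n-1})+(x_1)) \to 0.
\]
The key computations are $I(K_{1,n-1}):x_1 = (x_2,\ldots,x_n)$ (since every generator of $I(K_{1,n-1})$ has $x_1$ as a factor) and $I(K_{1,n-1})+(x_1) = (x_1)$. Hence the left and right terms are isomorphic, respectively, to $\mathbb{K}[x_1]$ with series $1/(1-t)$ and $\mathbb{K}[x_2,\ldots,x_n]$ with series $1/(1-t)^{n-1}$. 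Additivity of Hilbert series on short exact sequences then gives
\[
H_{R/I(K_{1,n-1})}(t) = \frac{t}{1-t} + \frac{1}{(1-t)^{n-1}} = \frac{t(1-t)^{n-2}+1}{(1-t)^{n-1}},
\]
which is the stated formula.

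There is no serious obstacle here; both parts are essentially bookkeeping. The one mild subtlety is making sure that in part (ii) the two fractions are brought over a common denominator correctly (the shift by $-1$ in the short exact sequence accounts for the extra factor of $t$ in the first summand), and that in part (i) one does not overlook the constant term when assembling the final rational expression.
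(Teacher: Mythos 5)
Your proof is correct, but it takes a different route from the paper for both parts. The paper derives both formulas uniformly from Stanley's combinatorial expression $H_{R/I(G)}(t)=\sum_{i=0}^{\alpha(G)}\frac{f_{i-1}t^{i}}{(1-t)^{i}}$, where $f_{i-1}$ counts the independent sets of size $i$: for $K_n$ one plugs in $f_{-1}=1$, $f_0=n$, $f_i=0$ for $i\ge 1$, and for the star one plugs in the binomial coefficients counting independent subsets of the leaves. Your part (i) is essentially a from-scratch derivation of the $K_n$ case of that formula (standard monomials are exactly those supported on an independent set, here a single vertex), so the two arguments are morally the same there. Your part (ii), however, genuinely differs: instead of counting independent sets of the star, you use the short exact sequence obtained by coloning out the center vertex $x_1$, computing $I(K_{1,n-1}):x_1=(x_2,\ldots,x_n)$ and $I(K_{1,n-1})+(x_1)=(x_1)$, and adding the two Hilbert series. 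Both computations are correct (including the degree shift producing the factor of $t$, and the edge case $n=2$ where $(1-t)^{n-2}=1$). What the paper's approach buys is uniformity and immediate generalizability to any graph via the independence complex; what your approach buys is self-containment (no appeal to Stanley's formula) and, as a bonus, it rehearses exactly the colon-ideal short exact sequence technique that the paper relies on heavily in Lemma 3.5 and Theorem 3.10.
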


\begin{proof}
    For a simple graph $G$, there is a nice combinatorial interpretation of $H_{R/I(G)}(t)$ in terms of the independent sets of $G$ due to Stanley \cite{stanley96}, which is given by
    \begin{equation}\label{eq:stanleyform}
H_{R/I(G)}(t)=\sum_{i=0}^{\alpha(G)}\frac{f_{i-1}t^{i}}{(1-t)^{i}},
\end{equation}
where $f_{i-1}$ denotes the number of independent sets of cardinality $i$ in $G$ (by convention $f_{-1} = 1$). For the complete graph $K_n$, we have $f_{-1}=1$, $f_0=n$, and $f_{i}=0$ for $i\geq 1$, and 
for $K_{1,n}$ we have $f_{-1}=1$, $f_0 = n$, and $f_{i} 
=\binom{n-1}{i+1}$ for $i=1,\ldots,n-2$. The formulas
in the statement are followed by using \eqref{eq:stanleyform}
and these specific values.
\end{proof}

We now prove our second main result that for any non-negative integer $m$, there exists a connected graph $G$ with $\deg(h_{R/I(G)}(t)) - \v(I(G))=m$. In fact, we prove the stronger result that for all $1 \leq v \leq d$, there is a connected graph $G$ with $(v,d) = (\v(I(G)),\deg(h_{R/I(G)}(t)))$. We 
require the following construction.

\noindent 
\textbf{Construction 3.}  For any integers $1 \leq v \leq d$, define the graph $G = H(v,d)$ to be the graph with the vertex set
$V(G) = \{x_i,y_i,w_i ~|~ 1 \leq i \leq v\} \cup
\{z_1,\ldots,z_{d-v}\}$
and edge set
\begin{eqnarray*}
E(G) &=& \{\{x_i,y_i\},\{y_i,w_i\},\{w_i,x_i\} ~|~ 1 \leq
i \leq v\} \cup \{\{x_1,x_i\} ~|~ 2 \leq i \leq v\} \\
& & \cup ~\{\{w_1,z_j\} ~|~ 1 \leq j \leq d-v\}.
\end{eqnarray*}
In other words, $G$ consists of $v$ triangles,
a $K_{1,v-1}$ on the vertices $\{x_1,\ldots,x_v\}$,
and  $d-v$ leaves attached to the vertex $w_1$.
The graph $H(4,7)$ is given in Figure \ref{fig(v,d)}.

\begin{figure}[h]
    \centering
\begin{tikzpicture}[line cap=round,line join=round,>=triangle 45,x=1.5cm,y=1cm]

\draw (-1,0)-- (0,0);
\draw (-0.5,1)-- (-1,0);
\draw (-0.5,1)-- (0,0);

\draw (.5,0)-- (1.5,0);
\draw (1,1)-- (.5,0);
\draw (1,1)-- (1.5,0);

\draw (-0.5,1)-- (1.5,2);
\draw (1,1) -- (1.5,2);
\draw (2.5,1)-- (1.5,2);
\draw (2,0)-- (3,0);
\draw (2,0)-- (2.5,1);
\draw (3,0)-- (2.5,1);
\draw (1.5,2)-- (1,3);
\draw (1.5,2)-- (2,3);
\draw (1,3)-- (2,3);
\draw (2,3)-- (3,3.5);
\draw (2,3)-- (3,3);
\draw (2,3)-- (3,2.5);

\draw (1.5,2) node[anchor=west] {$x_1$};
\draw (-0.5,1) node[anchor=south east] {$x_{2}$};
\draw (1,1) node[anchor=south east] {$x_{3}$};
\draw (2.5,1) node[anchor=south west] {$x_{4}$};

\draw (-1,0) node[anchor=north] {$y_2$};
\draw (0,0) node[anchor=north] {$w_2$};
\draw (.5,0) node[anchor=north] {$y_3$};
\draw (1.5,0) node[anchor=north] {$w_3$};
\draw (2,0) node[anchor=north] {$y_4$};
\draw (3,0) node[anchor=north] {$w_4$};
\draw (1,3) node[anchor=south] {$y_1$};
\draw (2,3) node[anchor=south] {$w_1$};
\draw (3,3.5) node[anchor=west] {$z_1$};
\draw (3,3) node[anchor=west] {$z_2$};
\draw (3,2.5) node[anchor=west] {$z_{3}$};

\begin{scriptsize}
\draw [fill=black] (-1,0) circle (1.5pt);
\draw [fill=black] (0,0) circle (1.5pt);
\draw [fill=black] (.5,0) circle (1.5pt);
\draw [fill=black] (1.5,0) circle (1.5pt);
\draw [fill=black] (2,0) circle (1.5pt);
\draw [fill=black] (3,0) circle (1.5pt);
\draw [fill=black] (-0.5,1) circle (1.5pt);
\draw [fill=black] (1,1) circle (1.5pt);
\draw [fill=black] (2.5,1) circle (1.5pt);
\draw [fill=black] (1.5,2) circle (1.5pt);
\draw [fill=black] (1,3) circle (1.5pt);
\draw [fill=black] (2,3) circle (1.5pt);
\draw [fill=black] (3,2.5) circle (1.5pt);
\draw [fill=black] (3,3) circle (1.5pt);
\draw [fill=black] (3,3.5) circle (1.5pt);
\end{scriptsize}
\end{tikzpicture}
\caption{The graph $H(4,7)$}
    \label{fig(v,d)}
\end{figure}
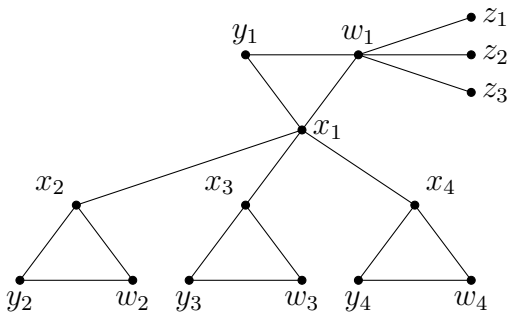

\begin{theorem}\label{thm:v<=d}
    For all integers $1 \leq v \leq d$, there exists a connected graph $G$
    with $(v,d) = (\v(I(G)),\deg(h_{R/I(G)}(t)))$.
\end{theorem}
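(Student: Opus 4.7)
The plan is to verify directly that $G = H(v,d)$ realizes $(\v(I(G)),\deg h_{R/I(G)}(t)) = (v,d)$. First, $\alpha(G) = d$: the set $\{y_1,\ldots,y_v,z_1,\ldots,z_{d-v}\}$ is independent of size $d$, while any independent set takes at most one vertex from each of the $v$ triangles $T_i = \{x_i,y_i,w_i\}$ and at most $d-v$ of the leaves (and choosing $w_1$ excludes every $z_j$), so $\alpha(G) \leq d$. For the upper bound $\v(I(G)) \leq v$, take $A = \{y_1,\ldots,y_v\}$: then $N_G(A) = \{x_i,w_i : 1 \leq i \leq v\}$, and this is minimal because no $y_i$ nor any $z_j$ lies in $N_G(A)$, so that $x_i$ is the sole cover-vertex on $\{x_i,y_i\}$, each $w_i$ is the sole cover-vertex on $\{y_i,w_i\}$, and $w_1$ is the sole cover-vertex on each $\{w_1,z_j\}$; hence $A \in \A_G$.

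The lower bound $\v(I(G)) \geq v$ is the key combinatorial step. Fix $A \in \A_G$. Since $A \cap N_G(A) = \emptyset$ and minimality of $N_G(A)$ forbids all three vertices of any triangle $T_i$ from lying in $N_G(A)$ (otherwise $y_i$ would be redundant, its two edges already covered by $x_i, w_i$), exactly two vertices of each $T_i$ lie in $N_G(A)$; call the third $u_i$. A case analysis on $u_i$ exhibits a witness $a_i \in A \cap (T_i \cup \{z_1,\ldots,z_{d-v}\})$: if $u_i \in \{x_i,w_i\}$, then $y_i \in N_G(A)$ requires a neighbor in $A$, which must be $u_i$ itself; if $u_i = y_i$ with $i \geq 2$, then $w_i \in N_G(A)$ has only $y_i$ as a possible $A$-neighbor (since $x_i \in N_G(A)$), forcing $y_i \in A$; if $u_1 = y_1$, the analogous argument for $w_1 \in N_G(A)$ forces $y_1 \in A$ or some $z_j \in A$. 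Because $T_1 \cup \{z_1,\ldots,z_{d-v}\}$ and the triangles $T_2,\ldots,T_v$ are pairwise disjoint, the $a_i$'s are distinct elements of $A$, giving $|A| \geq v$.

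For $\deg h_{R/I(G)}(t) = d$, I derive a closed form for the $h$-polynomial by iterating short exact sequences. Let $M_v := H(v,v) \setminus \{w_1\}$. Two short exact sequences attached to leaves (at $y_1$ in $M_v$, then at $x_1$ in $M_v \setminus \{y_1\}$) yield $h_{R/I(M_v)}(t) = (1+2t)^{v-1} + t(1+t)^{v-1}$, and splitting $H(v,v)$ at $w_1$ gives $h_{R/I(H(v,v))}(t) = t(1+2t)^{v-1} + h_{R/I(M_v)}(t)$. For $d > v$, splitting off the leaf $z_{d-v}$ yields the recursion
\[
h_{R/I(H(v,d))}(t) = t\cdot h_{R/I(M_v)}(t) + (1-t)\cdot h_{R/I(H(v,d-1))}(t),
\]
which telescopes to the closed form $h_{R/I(H(v,d))}(t) = (1+2t)^{v-1}\bigl[1 + t(1-t)^{d-v}\bigr] + t(1+t)^{v-1}$. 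Its coefficient of $t^d$ equals $(-1)^{d-v} 2^{v-1}$ for $d > v$ and $2^{v-1}+1$ for $d = v$, so it is nonzero and $\deg h_{R/I(G)}(t) = d$. The main obstacle is the case analysis for the $\v$-number lower bound, especially the subcase $u_1 = y_1$: one must notice that the leaf vertices $z_j$ at $w_1$ supply the alternative witnesses whenever $y_1 \notin A$, and check they remain disjoint from the triangle-witnesses for $i \geq 2$; the $h$-polynomial derivation, by contrast, is a routine telescoping calculation once the relevant short exact sequences are set up.
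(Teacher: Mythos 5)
Your proof is correct, and it uses the same witness graph $H(v,d)$ as the paper, but the execution differs in two substantive ways. (1) For the lower bound $\v(I(G))\geq v$, the paper argues that any $B\in\A_G$ must meet every triangle $T_i=\{x_i,y_i,w_i\}$ and concludes $|B|\geq v$ from disjointness; as stated, that intermediate claim actually fails for $i=1$ when $d>v$ (in $H(2,3)$ the set $B=\{x_2,z_1\}$ lies in $\A_G$ and misses $T_1$), though the final inequality survives. Your case analysis on the unique triangle vertex $u_i\notin N_G(A)$ --- in particular the subcase $u_1=y_1$, where the witness may be a leaf $z_j$, together with the observation that $T_1\cup\{z_1,\dots,z_{d-v}\},T_2,\dots,T_v$ are pairwise disjoint --- is the correct argument and in effect repairs this small gap. (2) For the degree, the paper uses a single short exact sequence coloning by $x_1$, which breaks $I(G):(x_1)$ and $I(G)+(x_1)$ into disjoint unions of edges, triangles, and one star, and then quotes \Cref{prop:hilb-complete-star}; you instead iterate short exact sequences at $w_1$, $y_1$, and the leaves $z_j$ and telescope. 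Both routes produce the same polynomial $h_{R/I(G)}(t)=(1+2t)^{v-1}\bigl[1+t(1-t)^{d-v}\bigr]+t(1+t)^{v-1}$, whose coefficient of $t^d$ is $(-1)^{d-v}2^{v-1}$ (or $2^{v-1}+1$ if $d=v$), so either works; the paper's is shorter, while yours is more self-contained and gives the closed form directly. The one step worth making explicit in your telescoping is that the resulting numerator does not vanish at $t=1$ (it equals $3^{v-1}+2^{v-1}$, or $2\cdot 3^{v-1}+2^{v-1}$ when $d=v$), so that the fraction over $(1-t)^{\alpha(G)}=(1-t)^{d}$ really is the reduced form and the recursion computes $h_{R/I(G)}(t)$ itself.
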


\begin{proof}
Let $G = H(v,d)$ be the graph of Construction 3.  We 
will show that this graph satisfies $(v,d) = (\v(I(G)),\deg(h_{R/I(G)}(t)))$. 

We first show that $\v(I(G))=v$. Consider the set of 
vertices $A=\{y_1,x_2,\ldots,x_v\}$. 
Then from the construction of $G$, one can see
that $A$ is an independent set of $G$ and 
$N_{G}(A)$ is a minimal vertex cover of $G$. Thus, 
$I(G):(\mathbf{x}_{A})=(N_G(A))$.  Since 
$\vert A\vert =v$, it follows that $\v(I(G))\leq v$.   

Now let $B$ be an independent set of $G$ such that 
$N_G(B)$ is a minimal vertex cover of $G$, i.e., 
$I(G):(\mathbf{x}_{B})=(N_G(B))$. 
Since $N_G(B)$ is a minimal vertex cover of $G$, 
it must contain at most two elements of $\{x_i,y_i,w_i\}$
for each $1 \leq i \leq v$.  Indeed, if $N_G(B)$ contained
all three vertices $\{x_i,y_i,w_i\}$ 
for some $i$, we can remove $y_i$ and still have a
vertex cover, contradicting the minimality of $N_G(B)$.
Consequently, $\{x_i,y_i,w_i\}\cap B\neq \emptyset$ 
for each $1\leq i\leq v$. 
Because these $v$ sets are all disjoint, 
it follows that $|B| \geq v$.  
Since $B$ was chosen arbitrarily from 
$\mathcal{A}_{G}$, we have $\v(I(G))\geq v$, 
and consequently, $\v(I(G))=v$.

We now verify that $\deg(h_{R/I(G)}(t))=d$. 
Consider the following short exact sequence:
$$0\longrightarrow (R/I(G):(x_1))(-1) 
\stackrel{\times x_1}{\longrightarrow} 
R/I(G)\longrightarrow R/(I(G)+(x_1))\longrightarrow 0.$$
By the additivity of the Hilbert series on the short exact sequence, we get 
    \begin{align}\label{eq2}
        H_{R/I(G)}(t)=t\cdot H_{R/I(G):(x_1)}(t)+H_{R/(I(G)+(x_1))}(t).
    \end{align}
    
    The ideal $I(G):(x_1)=(x_2,\ldots,x_v,y_1,w_1)+I(G')$, 
    where $G'$ is a disjoint union of the graphs 
    $H_1,\ldots, H_v$ where $H_1$ is the graph of
    the isolated vertices $\{z_1,\ldots,z_{d-v}\}$ and 
    $H_i$ is the graph of the edge $\{y_i,w_i\}$ 
    for $2 \leq i \leq v$. In other words, $H_i 
    \cong K_2$ for $2 \leq i \leq v$.
    Thus, $\dim(R/I(G):(x_1))=1+(d-v)+(v-1)=d$, and by \Cref{lem:reg-deg-v} (ii) and \Cref{prop:hilb-complete-star}, we have
    \begin{align*}
        H_{R/I(G):(x_1)}(t)&=\frac{1}{(1-t)^{d-v+1}}\cdot \underbrace{\frac{1+t}{1-t}\,\,\cdots\,\,  \frac{1+t}{1-t}}_{(v-1) \text{ times}}=\frac{(1+t)^{v-1}}{(1-t)^{d}}.
    \end{align*}
    From the above expression, we see that $\deg(h_{R/I(G):(x_1)}(t))=v-1$ and the leading coefficient of $h_{R/I(G):(x_1)}(t)$ is $1$. 
    
    The ideal $I(G)+(x_1)= 
    I(G'') + (x_1)$, 
    where $G''$ is a disjoint union of the graphs $L_1,\ldots, L_v$ 
    where $L_1$ is the induced graph on $\{y_1,w_1,
    z_1,\ldots,z_{d-v}\}$ and $L_i$ the induced
    graph on $\{x_i,y_i,w_i\}$ for $2 \leq i \leq v$.
    Note that $L_1 \cong K_{1,d-v+1}$ and $L_i
    \cong K_3$ for $2 \leq i \leq v$.
    Thus, $\dim(R/I(G)+(x_1))=(d-v+1)+(v-1)=d$. Again, by \Cref{lem:reg-deg-v} (ii) and \Cref{prop:hilb-complete-star}, 
    \begin{align*}
        H_{R/(I(G)+(x_1))}(t)&=\frac{t(1-t)^{d-v}+1}{(1-t)^{d-v+1}}\cdot \underbrace{\frac{1+2t}{1-t}\,\,\cdots\,\,  \frac{1+2t}{1-t}}_{(v-1) \text{ times}}\\
        &=\frac{t(1+2t)^{v-1}(1-t)^{d-v}+(1+2t)^{v-1}}{(1-t)^{d}}.
    \end{align*}
From the above expression, we see that 
$\deg(h_{R/(I(G)+(x_1))}(t))=d$ as $v-1<d$ and the 
leading coefficient of $h_{R/(I(G)+(x_1))}(t)$ is $(-1)^{d-v}\cdot 2^{v-1}$. 

Now, for $v<d$, it follows directly from \eqref{eq2} that $\deg(h_{R/I(G)}(t))=d$. Suppose $v=d$.  In this
case both polynomials that appear in the numerator
of \eqref{eq2} have degree $d$.
By comparing the leading coefficients of $t\cdot h_{R/I(G):(x_1)}(t)$ and $h_{R/I(G):(x_1)}(t)$, the top coefficients
do not cancel, and thus \eqref{eq2} implies 
that $\deg(h_{R/I(G)}(t))=d$.
\end{proof}


\section{The sum of the v-number and degree}\label{sec4}

In the study of the comparison between the regularity and the degree of $h$-polynomials of edge ideals, it was shown in 
\cite[Theorem 13]{hmv19} that for a graph $G$ with $n$ 
vertices, $$\reg(R/I(G))+\deg(h_{R/I(G)}(t))\leq n.$$
Subsequently,  \cite[Theorem 6.4]{bkoss24} classified all 
those graphs for which the above inequality becomes an 
equality. In particular, it was shown that for a connected 
graph $G$ with $n$ vertices, 
$\reg(R/I(G))+\deg(h_{R/I(G)}(t))= n$ if and only if $G$ is a Cameron-Walker graph with no pendant triangles.  
Inspired by this work, in this section, we derive similar properties about
$\v(I(G)) + \deg(h_{R/I(G)}(t))$.

Our first step is to prove a relation 
between the $\v$-number and $\beta(G)$, the vertex covering number 
of $G$. 

\begin{theorem}\label{thm:v<vcnumber}
    Let $G$ be a graph. 
    Then $\v(I(G)) \leq \beta(G)$. Furthermore, 
    $\v(I(G))=\beta(G)$ if and only if $G$ is a disjoint union of star graphs.
\end{theorem}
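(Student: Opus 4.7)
The plan is to prove the inequality $\v(I(G))\leq\beta(G)$ by a private-neighbor construction, then handle the equality characterization by reducing to connected $G$ (via \Cref{lem:reg-deg-v}(iv) and additivity of $\beta$) and arguing the contrapositive: a connected non-star $G$ satisfies $\v(I(G))<\beta(G)$. For the inequality, let $C$ be a minimum vertex cover. By minimality of $C$, each $v\in C$ admits a \emph{private neighbor} $w_v\in V(G)\setminus C$ with $N_G(w_v)\cap C=\{v\}$. Then $A:=\{w_v\mid v\in C\}$ is independent (it lies inside the independent set $V(G)\setminus C$) with $|A|=|C|$, and $N_G(A)=C$; hence $A\in\A_G$ and $\v(I(G))\leq\beta(G)$. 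For the \emph{if} direction of the characterization, a direct computation gives $\v(I(K_{1,n-1}))=1=\beta(K_{1,n-1})$ (take $A=\{x_2\}$), and the disjoint-union formula then follows from \Cref{lem:reg-deg-v}(iv).

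For the \emph{only if} direction, additivity of both invariants reduces us to the case where $G$ is connected. Since any connected graph with $\beta(G)=1$ is already a star, assume $\beta(G)=k\geq 2$; fix a minimum vertex cover $C$, and split into two cases to produce some $A\in\A_G$ with $|A|\leq k-1$, contradicting $\v(I(G))=\beta(G)$. \emph{Case 1: some $a\in V(G)\setminus C$ has at least two neighbors $v_1,\ldots,v_r\in C$ with $r\geq 2$.} Choose a private neighbor $w_v$ for each $v\in C\setminus\{v_1,\ldots,v_r\}$ and set $A:=\{a\}\cup\{w_v\mid v\in C\setminus\{v_1,\ldots,v_r\}\}$. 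Then $A\subseteq V(G)\setminus C$ is independent and $N_G(A)=C$ (the construction hits every element of $C$, while $V(G)\setminus C$ being independent forces $N_G(A)\subseteq C$), so $\v(I(G))\leq|A|=1+(k-r)\leq k-1$.

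\emph{Case 2: every $a\in V(G)\setminus C$ has exactly one neighbor in $C$.} Together with $V(G)\setminus C$ being independent, this forces each such $a$ to be a leaf of $G$. Since $G$ is connected and no path can use a degree-one vertex as an interior vertex, the induced subgraph $G[C]$ is connected; because $|C|\geq 2$, it also has at least one edge. Pick any maximal independent set $A$ of $G[C]$; since $G[C]$ has an edge we have $A\neq C$, so $|A|\leq|C|-1=k-1$. Then $A$ is independent in $G$, and $N_G(A)$ is a vertex cover of $G$ because (a) every edge of $G[C]$ is covered since $N_{G[C]}(A)=C\setminus A$ is a vertex cover of $G[C]$ (by maximality of $A$ in $G[C]$), and (b) every leaf edge $\{u,\ell\}$ is covered---if $u\in A$ then $\ell\in N_G(A)$, otherwise $u\in C\setminus A\subseteq N_{G[C]}(A)\subseteq N_G(A)$. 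Hence $A\in\A_G$ and $\v(I(G))\leq k-1<\beta(G)$.

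The principal obstacle is Case 2: the private-neighbor construction from a fixed minimum vertex cover always uses exactly $\beta(G)$ witnesses, so no savings are visible if one insists on targeting $C$ itself. The key move is to pivot to a different minimal vertex cover $N_G(A)$, where $A$ is chosen as a maximal independent set of the induced ``core'' $G[C]$; this trades each private-leaf witness in $V(G)\setminus C$ for an interior witness in $C$, and $|A|$ drops below $|C|$ exactly because $G[C]$ has an edge---which is precisely the structural obstruction to $G$ being a star.
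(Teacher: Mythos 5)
Your argument is correct in substance but takes a genuinely different route from the paper's. The paper simply cites \cite[Proposition 3.14]{ssvmon22} for the inequality $\v(I(G))\leq\beta(G)$, and proves the equality characterization by induction on $\beta(G)$: after a base case $\beta(G)=2$, it passes to $G'=G\setminus N_G[x]$ for a vertex $x$ in a minimum cover and combines $\v(I(G))\leq\v(I(G'))+1$ with the inductive hypothesis, splitting on whether $\beta(G')=\beta(G)-1$ and on whether $G'$ is a union of stars. You instead give a self-contained, non-inductive argument: the inequality comes from exhibiting an explicit witness $A\subseteq V(G)\setminus C$ with $N_G(A)=C$, and the strict inequality for a connected non-star comes from a clean dichotomy --- either some vertex outside $C$ sees at least two cover vertices (so one witness does double duty), or every vertex outside $C$ is a leaf, in which case $G[C]$ is connected with an edge and a maximal independent set of $G[C]$ gives a member of $\A_G$ of size at most $\beta(G)-1$. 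The Case~2 pivot to the different minimal vertex cover $N_G(A)$ is the idea the paper's induction avoids entirely; your version buys a constructive, reference-free proof at the cost of a slightly more delicate case analysis.

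One auxiliary claim is false as written, though harmlessly so: minimality of $C$ does \emph{not} guarantee each $v\in C$ a private neighbor $w_v$ with $N_G(w_v)\cap C=\{v\}$. For $C_4$ with cover $C=\{x_1,x_3\}$, both vertices outside $C$ are adjacent to both $x_1$ and $x_3$, so neither cover vertex has a private neighbor. What minimality actually yields is only that each $v\in C$ has \emph{some} neighbor in $V(G)\setminus C$ (take an edge at $v$ left uncovered by $C\setminus\{v\}$). Fortunately your argument never uses privacy: in the inequality you only need $N_G(A)=C$ and $|A|\leq|C|$, both of which hold for arbitrary choices of outside neighbors because $N_G(A)\subseteq C$ is automatic from the independence of $V(G)\setminus C$; likewise in Case~1 you only need $|A|\leq 1+(k-r)$. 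So the repair is purely terminological: replace ``private neighbor of $v$'' by ``a neighbor of $v$ in $V(G)\setminus C$'' throughout, and the proof stands.
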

\begin{proof}
For the first statement, see \cite[Proposition 3.14]{ssvmon22}.

Since $\v(I(G))$ and $\beta(G)$ are additive on a disjoint 
union of graphs, it is enough to consider $G$ to be 
connected. Note that $G$ is a star graph if and only if 
$\beta(G)=1$, and thus, for a star graph $G$, we have 
$\v(I(G)) = \beta(G)=1$. 
Therefore, if $\v(I(G))<\beta(G)$, then $G$ cannot be a star graph. 

Now, suppose $G$ is not a star graph. We then have 
$\beta(G)>1$. We proceed by induction on $\beta(G)$. 
The base case is $\beta(G)=2$. In this case, 
let us choose a minimal vertex cover $\{x_1,x_2\}$ of 
$G$. We consider the following two cases:

    \noindent
    \textbf{Case I.} Suppose $\{x_1,x_2\}\in E(G)$. 
    Then we observe that $N_{G}(x_1)$ and 
    $N_{G}(x_2)$ are both minimal vertex covers of $G$. 
    Thus, $I(G):(x_1)=(N_{G}(x_1))$ and 
    $I(G):(x_2)=(N_{G}(x_2))$, which implies that $\v(I(G))=1 < 2 = \beta(G)$.
    
    \noindent 
    \textbf{Case II.} Suppose $\{x_1,x_2\}\not\in E(G)$. 
    Since $G$ is connected and $\{x_1,x_2\}$ is a minimal 
    vertex cover of $G$, we must have 
    $N_{G}(x_1)\cap N_{G}(x_2)\neq \emptyset$. 
    Pick a vertex $z\in N_{G}(x_1)\cap N_{G}(x_2)$. 
    Then $N_{G}(z)=\{x_1,x_2\}$ as $\{x_1,x_2\}$ 
    is a vertex cover of $G$. In this case, 
    $I(G):(z)=(x_1,x_2)$, which gives $\v(I(G))=1<2=\beta(G)$.

    For the induction step, we assume $\beta(G)>2$. Choose 
    a minimal vertex cover $C$ of $G$ with 
    $\vert C\vert =\beta(G)$. Consider any vertex 
    $x\in C$ and the graph $G'=G\setminus N_{G}[x]$. 
    Then, we have $\beta(G')<\beta(G)$. Again, 
    we consider two cases as follows:

    \noindent
    \textbf{Case A.} Suppose $\beta(G')=\beta(G)-1$. First, assume $G'$ is a disjoint union of $k$ star graphs and some isolated vertices. Then $\beta(G')=k$, and so, $\beta(G)=k+1$. Let $y_1,\ldots,y_k$ be the center vertex of these $k$ star graphs. Since $G$ is connected, each $y_i$ is adjacent to at least one neighbor of $x$ in $G$. Then, we can choose $m$ neighbour vertices $A=\{x_1,\ldots,x_m\}$ of $x$ with $m\leq k$ such that $\{y_1,\ldots,y_k\}\subseteq N_{G}(A)$. Again, since $\beta(G')=\beta(G)-1$, no two neighbours of $x$ in $G$ can be adjacent. In this case, $A$ is an independent set, and $I(G):(\x_{A})$ is a minimal prime of $I(G)$. Indeed $N_G(A)=\{x,y_1,\ldots,y_k\}$ as $\beta(G)=k+1$. Consequently, we have $\v(I(G))\leq \vert A\vert= m\leq k<k+1=\beta(G)$. 
    
    Now, let us assume that $G'$ is not a disjoint union of star graphs and isolated vertices.
    By \cite[Proposition 3.12(a)]{v-edge} 
    (or, \cite[Proposition 3.13(i)]{ssvmon22}), 
    we have $\v(I(G))\leq \v(I(G):(x))+1$. Since 
    $I(G):(x)=I(G')+(N_{G}(x))$, it follows that 
    $\v(I(G):(x))=\v(I(G'))$. Now, $\beta(G)>2$ 
    implies $\beta(G')=\beta(G)-1\geq 2$. 
    Therefore, by the induction hypothesis and the above inequality, we get 
    $$\v(I(G))\leq \v(I(G'))+1<\beta(G')+1=\beta(G).$$
    
    \noindent 
    \textbf{Case B.} Suppose $\beta(G')<\beta(G)-1$. Then, only using \cite[Proposition 3.12(a)]{v-edge} and \cite[Proposition 3.14]{ssvmon22}, we have $\v(I(G))\leq \v(I(G'))+1\leq \beta(G')+1<\beta(G)$.
\end{proof}

We now come to the main result of this section, which presents a $\v$-number analog of the aforementioned results of \cite{bkoss24,hmv19}.

\begin{theorem}\label{thm:sumvdeg}
    If $G$ is a simple graph on $n$ vertices, then,
    $$\v(I(G))+\deg(h_{R/I(G)}(t))\leq n.$$
    Moreover, $\v(I(G))+\deg(h_{R/I(G)}(t))= n$ if 
    and only if $G$ is a disjoint union of star graphs.
\end{theorem}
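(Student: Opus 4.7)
The plan is to upper-bound the two summands separately by $\beta(G)$ and $\alpha(G)$, and then combine via $\alpha(G)+\beta(G)=n$. The bound $\v(I(G))\leq \beta(G)$ is already in hand from \Cref{thm:v<vcnumber}, where equality was characterized as ``$G$ is a disjoint union of star graphs''. So the only new ingredient I need is the complementary bound
$$\deg(h_{R/I(G)}(t))\leq \alpha(G)=\dim(R/I(G)).$$

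To prove this bound I would use Stanley's formula \eqref{eq:stanleyform}: placing the right-hand side over the common denominator $(1-t)^{\alpha(G)}$ gives
$$H_{R/I(G)}(t)=\frac{\sum_{i=0}^{\alpha(G)} f_{i-1}\,t^{i}(1-t)^{\alpha(G)-i}}{(1-t)^{\alpha(G)}},$$
and each summand in the numerator has degree exactly $\alpha(G)$, so the numerator is a polynomial of degree at most $\alpha(G)$. Evaluating the numerator at $t=1$ kills every term except $i=\alpha(G)$, leaving $f_{\alpha(G)-1}\geq 1$; hence the displayed fraction is already in reduced form and coincides with $h_{R/I(G)}(t)/(1-t)^{\alpha(G)}$. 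This gives $\deg(h_{R/I(G)}(t))\leq \alpha(G)$, and adding the two bounds yields the inequality in the theorem.

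For the characterization of equality, note that if $\v(I(G))+\deg(h_{R/I(G)}(t))=n=\beta(G)+\alpha(G)$, then equality must hold in each of the two bounds individually; in particular $\v(I(G))=\beta(G)$, and \Cref{thm:v<vcnumber} immediately forces $G$ to be a disjoint union of star graphs. For the converse direction, I would use additivity (\Cref{lem:reg-deg-v} (iii), (iv)) to reduce to verifying the equality on a single star $K_{1,n-1}$. Here a one-line computation gives $\v(I(K_{1,n-1}))=1$ (for any leaf $x_j$, we have $I(K_{1,n-1}):(x_j)=(x_1)$, which is a minimal prime), while \Cref{prop:hilb-complete-star} shows the $h$-polynomial numerator $1+t(1-t)^{n-2}$ has degree $n-1$, so the sum equals $n$.

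The only real step is identifying the bound $\deg(h_{R/I(G)}(t))\leq \alpha(G)$; once it is extracted from Stanley's formula, the rest is a bookkeeping argument that cleanly splits the hypothesis of equality into two tight inequalities, and invokes the previously established classification in \Cref{thm:v<vcnumber}. I do not anticipate any obstacle beyond making sure the numerator of the Stanley expression is not secretly divisible by $(1-t)$, which is ruled out by the observation that its value at $t=1$ equals $f_{\alpha(G)-1}\geq 1$.
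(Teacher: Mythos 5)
Your proposal is correct and follows essentially the same route as the paper: bound $\v(I(G))\leq\beta(G)$ by \cref{thm:v<vcnumber}, bound $\deg(h_{R/I(G)}(t))\leq\alpha(G)$ via Stanley's formula \eqref{eq:stanleyform}, combine with $\alpha(G)+\beta(G)=n$, and derive the equality characterization by forcing both bounds to be tight. The only (harmless) differences are that you explicitly check the Stanley numerator does not vanish at $t=1$, and you compute $\v(I(K_{1,n-1}))=1$ directly rather than squeezing it between $1$ and $n-\deg(h_{R/I(G)}(t))$ as the paper does.
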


\begin{proof}
    Recall from \eqref{eq:stanleyform},
    that the Hilbert series of $R/I(G)$
    can be 
    expressed as
$$H_{R/I(G)}(t)=\sum_{i=0}^{\alpha(G)}\frac{f_{i-1}t^{i}}{(1-t)^{i}},$$
where $f_{i-1}$ denotes the number of independent sets of cardinality $i$ in $G$. It follows from 
\eqref{eq:stanleyform} that $\deg(h_{R/I(G)}(t))\leq \alpha(G)$. 
Since $\v(I(G)) \leq \beta(G)$ by \Cref{thm:v<vcnumber}, we have 
$$\v(I(G))+\deg(h_{R/I(G)}(t))\leq \beta(G) + \alpha(G) = n,$$
where the last equality is well-known (it follows from the fact that the complement of a minimal vertex cover is a maximal independent set, and vice versa). 

For the second statement, we can reduce to the case that $G$ 
is connected. From the above inequalities, we observe that $\v(I(G))+\deg(h_{R/I(G)}(t))= n$ if and only if $\v(I(G))=\beta(G)$ and $\deg(h_{R/I(G)}(t))=\alpha(G)$. 
But by \Cref{thm:v<vcnumber}, $\v(I(G))=\beta(G)$ if and only if $G$ is a star graph. Thus, $\v(I(G))+\deg(h_{R/I(G)}(t))= n$ implies $G$ is a star graph.  For the reverse implication, if
$G$ is a star graph, then
Proposition \ref{prop:hilb-complete-star} gives
$\deg(h_{R/I(G)}(t))  = n-1$.  Since $1 \leq \v(I(G))$, the first
part of the proof then implies that $1+(n-1) \leq 
\v(I(G))+\deg(h_{R/I(G)}(t)) \leq n$, giving the desired result.
\end{proof}

\section{A menagerie of examples: comparison of v-number, degree, and regularity}\label{sec5}

In this section, we compare the $\v$-number, 
the degree of the $h$-polynomial, and the regularity of 
edge ideals of connected graphs. In particular, we show that all possible comparisons among these three invariants can occur.\par 

We first recall some useful notation.
If $M$ is a set of pairwise disjoint edges of $G$, then 
$M$ is called a \textit{matching} of $G$. 
An \textit{induced matching} of the graph ${G}$ is a 
matching $M=\{e_1,\ldots,e_m\}$ of ${G}$ such that the only edges of ${G}$ in the induced subgraph on $\bigcup_{i=1}^{m}{e_i}$ are $e_1,\ldots,e_m$. The \textit{induced matching number} of ${G}$ is the number of edges in a maximum induced matching of $G$ and is denoted by $\nu({G})$. It was proved in \cite[Corollary 6.9]{havan08} that for a 
chordal graph $G$, $\reg(R/I(G))=\nu(G)$. This fact will be 
used in this section.

To simplify our notation, we write $v,d$, and $r$ to 
denote $\v(I(G))$, $\deg(h_{R/I(G)}(t))$, and $\reg(R/I(G))$, respectively for a graph $G$, where $R=\K[V(G)]$.

\noindent
\begin{minipage}{0.6\textwidth} 
\begin{example}[{$v=d=r$}]\label{ex:5.1} {\rm 
Let $G$ be the graph in \Cref{fig(v=r=d)}, i.e., a single
edge. A straightforward calculation shows $v=d=1$. Also, $G$ being chordal with $\nu(G)=1$, implies $r=1$ for any field $\mathbb{K}$. Therefore, in this case, we have $v=d=r$.
}
\end{example}
\end{minipage}
\begin{minipage}{0.4\textwidth}
\begin{center}
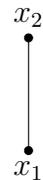

\begin{tikzpicture}[line cap=round,line join=round,>=triangle 45,x=1.5cm,y=1cm]

\draw (0,0)-- (0,1.5);

\draw (0,0) node[anchor=north] {$x_1$};
\draw (0,1.5) node[anchor=south] {$x_2$};

\begin{scriptsize}
\draw [fill=black] (0,0) circle (1.5pt);
\draw [fill=black] (0,1.5) circle (1.5pt);
\end{scriptsize}
\end{tikzpicture}
\captionof{figure}{A graph with $v=d=r$}
\label{fig(v=r=d)}
\end{center}
\end{minipage}

\noindent
\begin{minipage}{0.6\linewidth} 
\begin{example}[{$v=r<d$}]\label{ex:5.2}{\rm 
Let $G = K_{1,3}$ be the star graph in \Cref{fig(v=r<d)}.
Then $I(G):(x_1)=(x_2,x_3,x_4)$, which gives $v=1$. Since $G$ is chordal with $\nu(G)=1$, we have $r=1$ for any field $\mathbb{K}$. By Proposition \ref{prop:hilb-complete-star},
$d=3$. Thus, in this case, we have $v=r<d$.
}
\end{example}
\end{minipage}
\begin{minipage}{0.4\linewidth} 
    \centering
\begin{tikzpicture}[line cap=round,line join=round,>=triangle 45,x=1.5cm,y=1cm]

\draw (0,0)-- (0,1.5);
\draw (-1,0)-- (0,1.5);
\draw (1,0)-- (0,1.5);

\draw (0,0) node[anchor=north] {$x_3$};
\draw (-1,0) node[anchor=north] {$x_{2}$};
\draw (1,0) node[anchor=north] {$x_{4}$};
\draw (0,1.5) node[anchor=south] {$x_1$};

\begin{scriptsize}
\draw [fill=black] (0,0) circle (1.5pt);
\draw [fill=black] (1,0) circle (1.5pt);
\draw [fill=black] (-1,0) circle (1.5pt);
\draw [fill=black] (0,1.5) circle (1.5pt);
\end{scriptsize}
\end{tikzpicture}
\captionof{figure}{A graph with $v=r <d$}
    \label{fig(v=r<d)}
        \end{minipage}

\noindent
 \begin{minipage}{0.6\linewidth} 
\begin{example}[{$v=d<r$}]\label{ex:5.3}{\rm 
Let $G$ be the graph in \Cref{fig(v=d<r)}.
Then $N_{G}(x_{10})$ is a minimal vertex cover of $G$, and 
thus, $v=1$. Since $G$ is chordal with $\nu(G)=3$ (the edges
$\{x_1,x_2\}, \{x_4,x_5\}, \{x_6,x_7\}$ form an induced
matching), we have $r=3$ for any field $\mathbb{K}$. Using {\it Macaulay2}, we can check that $d=1$. Thus, in this case, we have $v=d<r$.  Our example
is similar to the graph found in \cite[Example 11]{hmv19} with $d<r$, but with additional edges adjacent to $x_6$.
}
\end{example}
\end{minipage}
\begin{minipage}{0.4\linewidth} 
    \centering
\begin{tikzpicture}[line cap=round,line join=round,>=triangle 45,x=1.5cm,y=1cm]

\draw (0,0)-- (0,1.5);
\draw (1,0.75)-- (0,0);
\draw (1,0.75)-- (0,1.5);
\draw (2,0)-- (2,1.5);
\draw (1,0.75)-- (2,0);
\draw (1,0.75)-- (2,1.5);
\draw (2.7,0)-- (2.7,1.5);
\draw (0.2,2.5)-- (1,3);
\draw (1.8,2.5)-- (1,3);
\draw (0.2,2.5)-- (1.8,2.5);
\draw (1,3)-- (0,0);
\draw (1,3)-- (0,1.5);
\draw (1,3)-- (1,0.75);
\draw (1,3)-- (2,0);
\draw (1,3)-- (2,1.5);
\draw (1,3)-- (2.7,0);
\draw (1,3)-- (2.7,1.5);
\draw (0.2,2.5)-- (0,0);
\draw (0.2,2.5)-- (0,1.5);
\draw (0.2,2.5)-- (1,0.75);
\draw (0.2,2.5)-- (2,0);
\draw (0.2,2.5)-- (2,1.5);
\draw (0.2,2.5)-- (2.7,0);
\draw (0.2,2.5)-- (2.7,1.5);

\draw (1.8,2.5)-- (0,0);
\draw (1.8,2.5)-- (0,1.5);
\draw (1.8,2.5)-- (1,0.75);
\draw (1.8,2.5)-- (2,0);
\draw (1.8,2.5)-- (2,1.5);
\draw (1.8,2.5)-- (2.7,0);
\draw (1.8,2.5)-- (2.7,1.5);

\draw (0,0) node[anchor=north] {$x_1$};
\draw (0,1.5) node[anchor=east] {$x_{2}$};
\draw (1,0.75) node[anchor=north] {$x_{3}$};
\draw (2,0) node[anchor=north] {$x_4$};
\draw (2,1.5) node[anchor=west] {$x_5$};
\draw (2.7,0) node[anchor=north] {$x_6$};
\draw (2.7,1.5) node[anchor=west] {$x_{7}$};
\draw (1,3) node[anchor=south] {$x_{10}$};
\draw (0.2,2.5) node[anchor=east] {$x_8$};
\draw (1.8,2.5) node[anchor=west] {$x_9$};

\begin{scriptsize}
\draw [fill=black] (0,0) circle (1.5pt);
\draw [fill=black] (0,1.5) circle (1.5pt);
\draw [fill=black] (2,0) circle (1.5pt);
\draw [fill=black] (2,1.5) circle (1.5pt);
\draw [fill=black] (1,0.75) circle (1.5pt);
\draw [fill=black] (2.7,0) circle (1.5pt);
\draw [fill=black] (2.7,1.5) circle (1.5pt);
\draw [fill=black] (1,3) circle (1.5pt);
\draw [fill=black] (0.2,2.5) circle (1.5pt);
\draw [fill=black] (1.8,2.5) circle (1.5pt);
\end{scriptsize}
\end{tikzpicture}
\captionof{figure}{A graph with $v=d < r$}
    \label{fig(v=d<r)}
\end{minipage}

\noindent
 \begin{minipage}{0.6\linewidth} 
\begin{example}[{$v<r<d$}]\label{ex:5.4}{\rm 
Let us consider $G$ to be the graph as shown in \Cref{fig(v<r<d)}; this is the path graph on five vertices. Then $N_{G}(x_{3})$ is a minimal vertex cover of $G$, and thus, $v=1$. Since $G$ is chordal with $\nu(G)=2$, we have $r=2$ for any field $\mathbb{K}$. Using {\it Macaulay2}, one can check that $d=3$. Hence, we have $v<r<d$ in this case.
}
\end{example}
\end{minipage}
\begin{minipage}{0.4\linewidth} 
    \centering
\begin{tikzpicture}[line cap=round,line join=round,>=triangle 45,x=1.5cm,y=1cm]

\draw (0,0)-- (0,1);
\draw (0,1)-- (1,2);
\draw (1,2)-- (2,1);
\draw (2,0)-- (2,1);

\draw (0,0) node[anchor=east] {$x_1$};
\draw (0,1) node[anchor=east] {$x_{2}$};
\draw (1,2) node[anchor=south] {$x_{3}$};
\draw (2,1) node[anchor=west] {$x_4$};
\draw (2,0) node[anchor=west] {$x_5$};

\begin{scriptsize}
\draw [fill=black] (0,0) circle (1.5pt);
\draw [fill=black] (0,1) circle (1.5pt);
\draw [fill=black] (1,2) circle (1.5pt);
\draw [fill=black] (2,1) circle (1.5pt);
\draw [fill=black] (2,0) circle (1.5pt);
\end{scriptsize}
\end{tikzpicture}
\captionof{figure}{A graph with $v < r < d$}
    \label{fig(v<r<d)}
\end{minipage}

\noindent
\begin{minipage}{0.6\linewidth} 
\begin{example}[{$v<d<r$}]\label{ex:5.5}{\rm 
Let us consider $G$ to be the graph as shown in \Cref{fig(v<d<r)}. Then $N_{G}(x_{8})$ is a minimal vertex cover of $G$, and thus, $v=1$. Since $G$ is chordal with $\nu(G)=3$, we have $r=3$ for any field $\mathbb{K}$. We can find $d=2$ by using {\it Macaulay2}. 
Hence, we have $v<d<r$ in this case.
}
\end{example}
\end{minipage}
\begin{minipage}{0.4\linewidth} 
    \centering
\begin{tikzpicture}[line cap=round,line join=round,>=triangle 45,x=1.5cm,y=1cm]

\draw (0,0)-- (0,1.5);
\draw (1,0.75)-- (0,0);
\draw (1,0.75)-- (0,1.5);
\draw (2,0)-- (2,1.5);
\draw (1,0.75)-- (2,0);
\draw (1,0.75)-- (2,1.5);
\draw (2.7,0)-- (2.7,1.5);
\draw (1,3)-- (0,0);
\draw (1,3)-- (0,1.5);
\draw (1,3)-- (1,0.75);
\draw (1,3)-- (2,0);
\draw (1,3)-- (2,1.5);
\draw (1,3)-- (2.7,0);
\draw (1,3)-- (2.7,1.5);

\draw (0,0) node[anchor=north] {$x_1$};
\draw (0,1.5) node[anchor=east] {$x_{2}$};
\draw (1,0.75) node[anchor=north] {$x_{3}$};
\draw (2,0) node[anchor=north] {$x_4$};
\draw (2,1.5) node[anchor=west] {$x_5$};
\draw (2.7,0) node[anchor=north] {$x_6$};
\draw (2.7,1.5) node[anchor=west] {$x_{7}$};
\draw (1,3) node[anchor=south] {$x_{8}$};

\begin{scriptsize}
\draw [fill=black] (0,0) circle (1.5pt);
\draw [fill=black] (0,1.5) circle (1.5pt);
\draw [fill=black] (2,0) circle (1.5pt);
\draw [fill=black] (2,1.5) circle (1.5pt);
\draw [fill=black] (1,0.75) circle (1.5pt);
\draw [fill=black] (2.7,0) circle (1.5pt);
\draw [fill=black] (2.7,1.5) circle (1.5pt);
\draw [fill=black] (1,3) circle (1.5pt);
\end{scriptsize}
\end{tikzpicture}
\captionof{figure}{A graph with $v < d < r$}
    \label{fig(v<d<r)}
\end{minipage}

\noindent
\begin{minipage}{0.6\linewidth} 
\begin{example}[{$v<d=r$}]\label{ex:5.6}{\rm 
Let $G$ be the graph as shown in \Cref{fig(v<d=r)}. Then $N_{G}(x_{6})$ is a minimal vertex cover of $G$, and thus, $v=1$. Since $G$ is chordal with $\nu(G)=2$, we have $r=2$ for any field $\mathbb{K}$. 
Finally, using {\it Macaulay2}, one can check that $d=2$. 
Hence, we have $v<d=r$.
}
\end{example}
\end{minipage}
\begin{minipage}{0.4\linewidth} 
    \centering
\begin{tikzpicture}[line cap=round,line join=round,>=triangle 45,x=1.5cm,y=1cm]

\draw (0,0)-- (0,1.5);
\draw (1,0.75)-- (0,0);
\draw (1,0.75)-- (0,1.5);
\draw (2,0)-- (2,1.5);
\draw (1,0.75)-- (2,0);
\draw (1,0.75)-- (2,1.5);
\draw (1,3)-- (0,0);
\draw (1,3)-- (0,1.5);
\draw (1,3)-- (1,0.75);
\draw (1,3)-- (2,0);
\draw (1,3)-- (2,1.5);

\draw (0,0) node[anchor=north] {$x_1$};
\draw (0,1.5) node[anchor=east] {$x_{2}$};
\draw (1,0.75) node[anchor=north] {$x_{3}$};
\draw (2,0) node[anchor=north] {$x_4$};
\draw (2,1.5) node[anchor=west] {$x_5$};
\draw (1,3) node[anchor=south] {$x_{6}$};

\begin{scriptsize}
\draw [fill=black] (0,0) circle (1.5pt);
\draw [fill=black] (0,1.5) circle (1.5pt);
\draw [fill=black] (2,0) circle (1.5pt);
\draw [fill=black] (2,1.5) circle (1.5pt);
\draw [fill=black] (1,0.75) circle (1.5pt);
\draw [fill=black] (1,3) circle (1.5pt);
\end{scriptsize}
\end{tikzpicture}

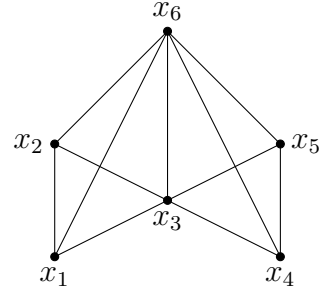
\captionof{figure}{A graph with $v < d = r$}
    \label{fig(v<d=r)}
\end{minipage}

In the next four examples, we make use of the graph of \Cref{fig:11vertex} to construct our examples.

\begin{example}[{$d=r<v$ and $d<v=r$}]\label{ex:5.7}{\rm
    Consider the graph $G$ as shown in \Cref{fig:11vertex}. Then, using {\it Macaulay2}, we get $d=2$ and $v=3$ as mentioned in the proof of \Cref{Thm:d<v}. Also, taking $\mathbb{K}=\mathbb{Q}$ and using {\it Macaulay2}, one can verify that $r=2$. Thus, in this case, we have $d=r<v$. Again, if we consider $\K=\ZZ/2\ZZ$, then we have $r=3$, and the $d$ and $v$ will be the same as they do not depend upon the characteristics of the base field. Thus, in this case, we have $d<v=r$.   This example originally came from
    work of Jaramillo and Villarreal \cite{v-edge}.
    }
\end{example}

\begin{example}[{$r<d=v$}]\label{ex:5.8}{\rm
    Consider the graph $G$ as shown in \Cref{fig:11vertex}. Now construct a graph $H$ such that 
    \begin{enumerate}
        \item[$\bullet$] $V(H)=V(G)\cup\{y\}$, and 
        \item[$\bullet$] $E(H)=E(G)\cup \{\{x_1,y\},\{x_2,y\},\{x_3,y\},\{x_4,y\}\}$.
    \end{enumerate}
   Then, using {\it Macaulay2} we find
   $\deg(h_{R/I(H)}(t))=\v(I(H))=3$ and $\reg(R/I(H))=2$, where $R=\mathbb{Q}[V(H)]$. Consequently, 
   we have $r<d=v$ in this case.
    }
\end{example}

\begin{example}[{$r<v<d$}]\label{ex:5.9}{\rm
    Again, let $G$ be as in  \Cref{fig:11vertex}
    and consider the graph $H$ defined as follows:
    \begin{enumerate}
        \item[$\bullet$] $V(H)=V(G)\cup \{y_1,y_2\}$, and 
        \item[$\bullet$] $E(H)=E(G)\cup \{\{x_i,y_1\},\{x_i,y_2\}\mid 1\leq i\leq 4\}$.
    \end{enumerate}
    Then we can use {\it Macaulay2} to find that 
    $$\reg(R/I(H))=2<\v(I(H))=3<\deg(h_{R/I(H)}(t))=4,$$ where $R=\mathbb{Q}[V(H)]$. Therefore, we have $r<v<d$ in this case.
    }
\end{example}

\begin{example}[{$r<d<v$ and $d<v<r$}]\label{ex:5.10}{\rm
     Let us construct a graph $H_n$ using $n$ copies of the graph $G$ given in Figure \ref{fig:11vertex}. Let $G_{i}$ be the graph isomorphic to $G$ with $V(G_i)=\{x^{(i)}_1,\ldots, x^{(i)}_{11}\}$ and the isomorphism $x^{(i)}_j$ goes to $x_j$. Now we define $H_n$ as follows:
     \begin{enumerate}
    \item[$\bullet$] $V(H_n)=V(G_1)\cup\cdots\cup V(G_n)\cup\{y_1,\ldots,y_n\}$,
    \item[$\bullet$] $E(H_n)=E(G_1)\cup\cdots\cup E(G_n)\cup\{\{y_{i},x^{(i)}_j\}\mid j\in [11], i\in [n]\}\cup \{\{y_i,y_j\}\mid i\neq j\}$.
\end{enumerate}
\noindent Recall that $[m] = \{1,\ldots,m\}$ for a positive integer $m$.

\noindent\textbf{Claim:} For every $n\geq 1$, we have $\deg(h_{R/I(H_n)}(t))=2n+1$, $\v(I(H_n))=3n-2$, and 
$$\reg(R/I(H_n))=
\begin{cases}
    2n \quad\text{ if } \mathbb{K=Q},\\
    3n \quad\text{ if } \mathbb{K=Z}/2\mathbb{Z}.
\end{cases}
$$ 
\textit{Proof of the claim.} 
For a square-free monomial ideal, 
the degree of the $h$-polynomial and the $\v$-number does not 
depend upon the characteristic of the base field. Thus, one can 
easily check using {\it Macaulay2} that $\alpha(G)=3$, 
$\alpha(G\setminus \{x_1,\ldots,x_{11}\})=0$, $\deg(G)=2$, 
$\deg(G\setminus \{x_1,\ldots,x_{11}\})=0$, $\v(I(G))=3$, 
$\v(I(G\setminus \{x_1,\ldots,x_{11}\}))=0$.
As shown in the proof of Theorem \ref{Thm:d<v}, the 
leading coefficient of $h_{R/I(G)}(t)$ is positive.  
Also, since $G \setminus \{x_1,\ldots,x_n\}$ is empty, 
the associated $h$-polynomial is $1$, whose leading coefficient is clearly positive. Therefore, by considering $A_i=\{x^{(i)}_1,\ldots,x^{(i)}_{11}\}$, we have $\alpha(G_i)-\alpha(G_i\setminus A_i)=3$, 
$\deg(G_i)-\deg(G_i\setminus A_i)=2=3-1$, and 
$3=\v(I(G_i))\geq 1+\v(I(G_i\setminus A_i))=1$. Hence, 
the graph $H_n$ can be viewed as a graph described in both 
Construction 1 and Construction 2. Thus, by \Cref{lem:deg} and 
\Cref{lem:v-num}, it follows that 
$\deg(h_{R/I(H_n)}(t))=1+n\cdot\deg(G)=2n+1$ and $\v(I(H_n))=1+0+3(n-1)=3n-2$.

Now, we prove $\reg(R/I(H_n))=2n$ if $\mathbb{K=Q}$. The base case $n=1$ can be verified directly using {\it Macaulay2}. Let us consider the 
following short exact sequence
$$0\rightarrow (R/I(H_n):(y_n))(-1)
\stackrel{\times y_n}{\longrightarrow} R/I(H_n)\rightarrow R/(I(H_n)+(y_n))\rightarrow 0.$$
    From the description of $I(H_n):(y_n)$ and $I(H_n)+(y_n)$ as given earlier in the proof of \Cref{lem:deg} and using the induction hypothesis, we have $\reg(R/I(H_n):(y_n))=2n-2$ and $\reg(R/(I(H_n)+(y_n)))=2n$. Since $\reg(R/I(H_n))\geq\reg(R/(I(H_n)+(y_n)))$, it follows that $\reg(R/I(H_n))= 2n$ by the regularity lemma
    (see \cite[Theorem 4.6]{CHHVT}). The case $\mathbb{K=Z}/2\mathbb{Z}$ is similar, but we make use of the fact that in the base case,
     $\reg(R/I(H_1)) = 3$.
}

Because of the claim, for $n \geq 4$, we have $r < d < v$ if 
$\mathbb{K} = \mathbb{Q}$, and $d < v < r$ if $\mathbb{K}=
\mathbb{Z}/2\mathbb{Z}$.
\end{example}

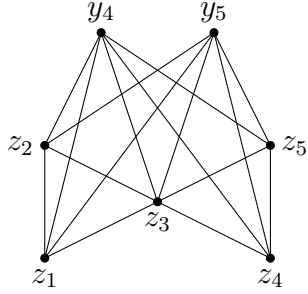
\begin{figure}[h]
    \centering
    \begin{tikzpicture}[line cap=round,line join=round,>=triangle 45,x=1.5cm,y=1cm]

\draw (0,0)-- (0,1.5);
\draw (1,0.75)-- (0,0);
\draw (1,0.75)-- (0,1.5);
\draw (2,0)-- (2,1.5);
\draw (1,0.75)-- (2,0);
\draw (1,0.75)-- (2,1.5);
\draw (0.5,3)-- (0,0);
\draw (0.5,3)-- (0,1.5);
\draw (0.5,3)-- (1,0.75);
\draw (0.5,3)-- (2,0);
\draw (0.5,3)-- (2,1.5);
\draw (1.5,3)-- (0,0);
\draw (1.5,3)-- (0,1.5);
\draw (1.5,3)-- (1,0.75);
\draw (1.5,3)-- (2,0);
\draw (1.5,3)-- (2,1.5);

\draw (0,0) node[anchor=north] {$z_1$};
\draw (0,1.5) node[anchor=east] {$z_{2}$};
\draw (1,0.75) node[anchor=north] {$z_{3}$};
\draw (2,0) node[anchor=north] {$z_4$};
\draw (2,1.5) node[anchor=west] {$z_5$};
\draw (0.5,3) node[anchor=south] {$y_{4}$};
\draw (1.5,3) node[anchor=south] {$y_{5}$};

\begin{scriptsize}
\draw [fill=black] (0,0) circle (1.5pt);
\draw [fill=black] (0,1.5) circle (1.5pt);
\draw [fill=black] (2,0) circle (1.5pt);
\draw [fill=black] (2,1.5) circle (1.5pt);
\draw [fill=black] (1,0.75) circle (1.5pt);
\draw [fill=black] (0.5,3) circle (1.5pt);
\draw [fill=black] (1.5,3) circle (1.5pt);
\end{scriptsize}
\end{tikzpicture}
    \caption{A graph $G$}
    \label{fig:z}
\end{figure}

\begin{example}[{$d<r<v$}]\label{ex:5.11} 
    Let us construct a graph $H$ using the graph $H_3$ as constructed in \Cref{Thm:d<v} and the graph $G$ of \Cref{fig:z} in the following way:
     \begin{enumerate}
    \item[$\bullet$] $V(H)=V(H_3)\cup V(G)$, and 

    \item[$\bullet$] $E(H)=E(H_3)\cup E(G)\cup E(K_5)$, 
\end{enumerate}
where $K_5$ is a complete graph on $\{y_1,\ldots,y_5\}$. Then, using {\it Macaulay2}, one can check that $d=8$. Also, we have $v=10$, and $r=9$ when we take $\mathbb{K=Q}$.
We were not able to verify these values directly using {\it Macaulay2}
since the computations would not finish. 
Instead, we give a short proof of this claim:

\noindent
\textbf{Claim:} $\v(I(H))=10$ and if we take $\mathbb{K=Q}$, then $\reg(R/I(H))=9$.

\noindent
\textit{Proof of the claim.} First, let us proof that $\v(I(H))=10$. Take an independent set $B$ of $H_3$ such that $I(H_3):(\mathbf{x}_{B})$ is a prime ideal and 
$\vert B\vert=\v(I(H_3))$. By \Cref{Thm:d<v}, it follows that $\vert B\vert=9$. Then, from the structure of $H$, we have $B\cup \{z_3\}$ is an independent set of $H$, whose neighbours form a minimal vertex cover of $H$. Consequently, $\v(I(H))\leq \vert B\cup\{z_3\}\vert=10$. For reverse inequality, let $A$ be an independent set of $H$ such that $N_{H}(A)$ is a minimal vertex cover of $H$ and $\vert A\vert =\v(I(H))$. Then $A\cap V(H_3)$ is an independent set of $H_3$, whose neighbours also form a vertex cover of $H_3$. Since $\v(I(H_3))= 9$, we have $\vert A\cap V(I(H_3))\vert\geq 9$. Note that neighbours of $A\cap V(I(H_3))$ in $H$ can never cover any of the edges of $
H_{\{z_1,\ldots,z_5\}} = G_{\{z_1,\ldots,z_5\}}$. Thus, $A\setminus V(H_3)\neq \emptyset$. Hence, $\vert A\vert> 9$, and consequently, we have $\v(I(H))=10$. 

Let $R=\mathbb{Q}[V(H)]$. To prove $\reg(R/I(H))=9$, we first use {\it Macaulay2} to check that $\reg(R/I(H_2))=5$. Next, let us prove $\reg(R/I(H_3))=7$. By looking at the ideals $I(H_3):(y_3)$ and $I(H_3)+(y_3)$ as described in the proof of \Cref{Thm:d<v}, one can obtain via {\it Macaulay2}  that $\reg(R/I(H_3):(y_3))=6$ and $\reg(R/(I(H_3)+(y_3)))=7$. Thus, by \cite[Lemma 2.10]{dhs13}, we get $\reg(R/I(H_3))=7$. Now, consider the ideal $I(H):(y_5)=(y_1,\ldots,y_4,z_1,\ldots,z_5)+I(H')$, where $H'$ is a disjoint union of $3$ copies of the graph shown in \Cref{fig:11vertex}. Then, we have $\reg(R/I(H):(y_5))=6$. Again, $(I(H)+(y_5)):(y_4)=(y_1,\ldots,y_3,y_5,z_1,\ldots,z_5)+I(H')$, and thus, we also have $\reg(R/(I(H)+(y_5)):(y_4))=6$. Now, observe that $I(H)+(y_4,y_5)=(y_4,y_5)+I(H_3)+I(G\setminus \{y_4,y_5\})$. Since $G\setminus \{y_4,y_5\}$ is a chordal graph with induced matching number $2$, we get $\reg(R/I(H)+(y_4,y_5))=9$. Therefore, using \cite[Lemma 2.10]{dhs13} again and again, we have $\reg(R/I(H))=9$. Consequently, we get $d<r<v$ for the graph $H$ in the setup $\mathbb{K=Q}$.
\end{example}

\begin{remark}
    Many of our examples make use of the fact that
    the graph $G$ of Figure \ref{fig:11vertex} has a regularity
    that depends upon the characteristic. Civan 
    \cite[Section 4]{civan23} constructed connected graphs for which the $\v$-number can be made arbitrarily larger than the regularity, independent of the characteristic of the base field $\K$.
    In \Cref{ex:5.7} to \Cref{ex:5.11}, we used $\K = \mathbb{Q}$ and the graph $G$ from \Cref{fig:11vertex}. 
    If we wish to obtain a similar result independent of the choice of the base field, we could instead consider
    Civan's example for our ``base graph".
\end{remark}

\newpage

\section*{Appendix A: Tables comparing the v-number and degree} 
We computed and plotted the tuple $(\v(I(G)),\deg(h_{R/I(G)}(t)))$
for all connected graphs $G$ with $2 \leq |V(G)| \leq 10$ (see Figure \ref{fig:comparevandd}). Note that by Theorem
 \ref{thm:sumvdeg}, we have $\v(I(G))+\deg(h_{R/I(G)}(t) \leq n$.
 In fact, the plots seem to imply that  $2\v(I(G)) + \deg(h_{R/I(G)}(t)) \leq n+1.$
\begin{figure}[h]
\begin{tikzpicture}[scale=0.7]
\foreach \point in {(1/2,1/2)}{
    \fill \point circle (2pt);
}

\draw[thin,->] (5.5,0) -- (8.5,0) node[right] {$v=\v(I)$};
\draw[thin,->] (6,-0.5) -- (6,5.0) node[above] {$d = \deg$};

\foreach \x [count=\xi starting from 0] in {1,2,3,4,5,6,7,8,9}{
    \draw (5.9,\x/2) -- (6.1,\x/2);
    \ifodd\xi
        \node[anchor=east] at (0,\x/2) {$\x$};
    \fi
 \draw (6+1/2,1pt) -- (6+1/2,-1pt);
 \draw (6+2/2,1pt) -- (6+2/2,-1pt);
 \draw (6+3/2,1pt) -- (6+3/2,-1pt);
     \node[anchor=north] at (6+1/2,0) {$1$};
     \node[anchor=north] at (6+2/2,0) {$2$};
     \node[anchor=north] at (6+3/2,0) {$3$};
}

\foreach \point in {(6+1/2,1/2),(6+1/2,2/2)}{
    \fill \point circle (2pt);
}

\draw[thin,->] (-0.5,0) -- (2.5,0) node[right] {$v=\v(I)$};
\draw[thin,->] (0,-0.5) -- (0,5.0) node[above] {$d = \deg$};

\foreach \x [count=\xi starting from 0] in {1,2,3,4,5,6,7,8,9}{
        \draw (1pt,\x/2) -- (-1pt,\x/2);
    \ifodd\xi
        \node[anchor=east] at (0,\x/2) {$\x$};
    \fi
 \draw (1/2,1pt) -- (1/2,-1pt);
 \draw (2/2,1pt) -- (2/2,-1pt);
 \draw (3/2,1pt) -- (3/2,-1pt);
     \node[anchor=north] at (1/2,0) {$1$};
     \node[anchor=north] at (2/2,0) {$2$};
     \node[anchor=north] at (3/2,0) {$3$};
}

\draw[thin,->] (11.5,0) -- (14.5,0) node[right] {$v=\v(I)$};
\draw[thin,->] (12,-0.5) -- (12,5.0) node[above] {$d = \deg$};

\foreach \x [count=\xi starting from 0] in {1,2,3,4,5,6,7,8,9}{
    \draw (11.9,\x/2) -- (12.1,\x/2);
    \ifodd\xi
        \node[anchor=east] at (0,\x/2) {$\x$};
    \fi
\draw (12+1/2,1pt) -- (12+1/2,-1pt);
 \draw (12+2/2,1pt) -- (12+2/2,-1pt);
 \draw (12+3/2,1pt) -- (12+3/2,-1pt);
     \node[anchor=north] at (12+1/2,0) {$1$};
     \node[anchor=north] at (12+2/2,0) {$2$};
     \node[anchor=north] at (12+3/2,0) {$3$};
}

\
\foreach \point in {(12+1/2,1/2),(12+1/2,2/2),(12+1/2,3/2)}{
    \fill \point circle (2pt);
}

\node at (3,4) {$n=2$};
\node at (9,4) {$n=3$};
\node at (16,4) {$n=4$};
\end{tikzpicture}
\vspace{.5cm}


\begin{tikzpicture}[scale=0.7]
  \foreach \point in {(1/2,1/2),(1/2,2/2),(2/2,2/2),(1/2,3/2),(1/2,4/2)}{
    \fill \point circle (2pt);
}

\draw[thin,->] (5.5,0) -- (8.5,0) node[right] {$v=\v(I)$};
\draw[thin,->] (6,-0.5) -- (6,5.0) node[above] {$d = \deg$};

\foreach \x [count=\xi starting from 0] in {1,2,3,4,5,6,7,8,9}{
    \draw (5.9,\x/2) -- (6.1,\x/2);
    \ifodd\xi
        \node[anchor=east] at (0,\x/2) {$\x$};
    \fi
\draw (6+1/2,1pt) -- (6+1/2,-1pt);
 \draw (6+2/2,1pt) -- (6+2/2,-1pt);
 \draw (6+3/2,1pt) -- (6+3/2,-1pt);
     \node[anchor=north] at (6+1/2,0) {$1$};
     \node[anchor=north] at (6+2/2,0) {$2$};
     \node[anchor=north] at (6+3/2,0) {$3$};
}

\foreach \point in {(6+1/2,1/2),(6+1/2,2/2),(6+1/2,3/2),(6+1/2,4/2),(6+1/2,5/2),
  (6+2/2,2/2),(6+2/2,3/2)}{
    \fill \point circle (2pt);
}

\draw[thin,->] (-0.5,0) -- (2.5,0) node[right] {$v=\v(I)$};
\draw[thin,->] (0,-0.5) -- (0,5.0) node[above] {$d = \deg$};

\foreach \x [count=\xi starting from 0] in {1,2,3,4,5,6,7,8,9}{
        \draw (1pt,\x/2) -- (-1pt,\x/2);
    \ifodd\xi
        \node[anchor=east] at (0,\x/2) {$\x$};
    \fi
 \draw (1/2,1pt) -- (1/2,-1pt);
 \draw (2/2,1pt) -- (2/2,-1pt);
 \draw (3/2,1pt) -- (3/2,-1pt);
     \node[anchor=north] at (1/2,0) {$1$};
     \node[anchor=north] at (2/2,0) {$2$};
     \node[anchor=north] at (3/2,0) {$3$};
}

\draw[thin,->] (11.5,0) -- (14.5,0) node[right] {$v=\v(I)$};
\draw[thin,->] (12,-0.5) -- (12,5.0) node[above] {$d = \deg$};

\foreach \x [count=\xi starting from 0] in {1,2,3,4,5,6,7,8,9}{
    \draw (11.9,\x/2) -- (12.1,\x/2);
    \ifodd\xi
        \node[anchor=east] at (0,\x/2) {$\x$};
    \fi
\draw (12+1/2,1pt) -- (12+1/2,-1pt);
 \draw (12+2/2,1pt) -- (12+2/2,-1pt);
 \draw (12+3/2,1pt) -- (12+3/2,-1pt);
     \node[anchor=north] at (12+1/2,0) {$1$};
     \node[anchor=north] at (12+2/2,0) {$2$};
     \node[anchor=north] at (12+3/2,0) {$3$};
}

\
\foreach \point in {(12+1/2,1/2),(12+1/2,2/2),(12+1/2,3/2),(12+1/2,4/2),(12+1/2,5/2),(12+1/2,6/2),
  (12+2/2,2/2),(12+2/2,3/2),(12+2/2,4/2)}{
    \fill \point circle (2pt);
}

\node at (3,4) {$n=5$};
\node at (9,4) {$n=6$};
\node at (16,4) {$n=7$};
\end{tikzpicture}
\vspace{.5cm}


\begin{tikzpicture}[scale=0.7]

  \foreach \point in {
    (1/2,1/2),(1/2,2/2),(1/2,3/2),(1/2,4/2),(1/2,5/2),(1/2,6/2),
    (1/2,7/2),
    (2/2,2/2),(2/2,3/2),(2/2,4/2),(2/2,5/2),
    (3/2,3/2)}{
    \fill \point circle (2pt);
}

\draw[thin,->] (5.5,0) -- (8.5,0) node[right] {$v = \v(I)$};
\draw[thin,->] (6,-0.5) -- (6,5.0) node[above] {$d = \deg$};

\foreach \x [count=\xi starting from 0] in {1,2,3,4,5,6,7,8,9}{
   
    \draw (5.9,\x/2) -- (6.1,\x/2);
    \ifodd\xi
    \fi
\draw (6+1/2,1pt) -- (6+1/2,-1pt);
 \draw (6+2/2,1pt) -- (6+2/2,-1pt);
 \draw (6+3/2,1pt) -- (6+3/2,-1pt);
     \node[anchor=north] at (6+1/2,0) {$1$};
     \node[anchor=north] at (6+2/2,0) {$2$};
     \node[anchor=north] at (6+3/2,0) {$3$};
}

\draw[thin,->] (-0.5,0) -- (2.5,0) node[right] {$v=\v(I)$};
\draw[thin,->] (0,-0.5) -- (0,5.0) node[above] {$d = \deg$};

\foreach \x [count=\xi starting from 0] in {1,2,3,4,5,6,7,8,9}{
        \draw (1pt,\x/2) -- (-1pt,\x/2);
    \ifodd\xi
        \node[anchor=east] at (0,\x/2) {$\x$};
    \fi
 \draw (1/2,1pt) -- (1/2,-1pt);
 \draw (2/2,1pt) -- (2/2,-1pt);
 \draw (3/2,1pt) -- (3/2,-1pt);
     \node[anchor=north] at (1/2,0) {$1$};
     \node[anchor=north] at (2/2,0) {$2$};
     \node[anchor=north] at (3/2,0) {$3$};
}

\draw[thin,->] (11.5,0) -- (14.5,0) node[right] {$v=\v(I)$};
\draw[thin,->] (12,-0.5) -- (12,5.0) node[above] {$d = \deg$};

\foreach \x [count=\xi starting from 0] in {1,2,3,4,5,6,7,8,9}{
    \draw (11.9,\x/2) -- (12.1,\x/2);
    \ifodd\xi

        \node[anchor=east] at (0,\x/2) {$\x$};
    \fi
\draw (12+1/2,1pt) -- (12+1/2,-1pt);
 \draw (12+2/2,1pt) -- (12+2/2,-1pt);
 \draw (12+3/2,1pt) -- (12+3/2,-1pt);
 \draw (12+4/2,1pt) -- (12+4/2,-1pt);
     \node[anchor=north] at (12+1/2,0) {$1$};
     \node[anchor=north] at (12+2/2,0) {$2$};
     \node[anchor=north] at (12+3/2,0) {$3$};
     \node[anchor=north] at (12+4/2,0) {$4$};
}

\
\foreach \point in {(6+1/2,1/2),(6+1/2,2/2),(6+1/2,3/2),(6+1/2,4/2),(6+1/2,5/2),(6+1/2,6/2),
(6+1/2,7/2),(6+1/2,8/2),
(6+2/2,2/2),(6+2/2,3/2),(6+2/2,4/2),(6+2/2,5/2),(6+2/2,6/2),
(6+3/2,3/2),(6+3/2,4/2),(6+3/2,5/2)}{
    \fill \point circle (2pt);
}

\node at (3,4) {$n=8$};
\node at (9,4) {$n=9$};
\node at (15,4) {$n=10$};
\foreach \point in {
(12+1/2,1/2),
(12+1/2,2/2),(12+1/2,3/2),(12+1/2,4/2),(12+1/2,5/2),(12+1/2,6/2),
(12+1/2,7/2),(12+1/2,8/2),(12+1/2,9/2),
(12+2/2,2/2),(12+2/2,3/2),(12+2/2,4/2),(12+2/2,5/2),
(12+2/2,6/2),(12+2/2,7/2),
(12+3/2,3/2),(12+3/2,4/2),(12+3/2,5/2)}{
    \fill \point circle (2pt);
}

\end{tikzpicture}

\caption{Comparing the v-number $\v(I)$ and the degree of $h_{R/I}(t)$
  for all connected graphs on $n \in \{2,\ldots,10\}$ vertices when $I$ is an edge ideal.}\label{fig:comparevandd}
\end{figure}
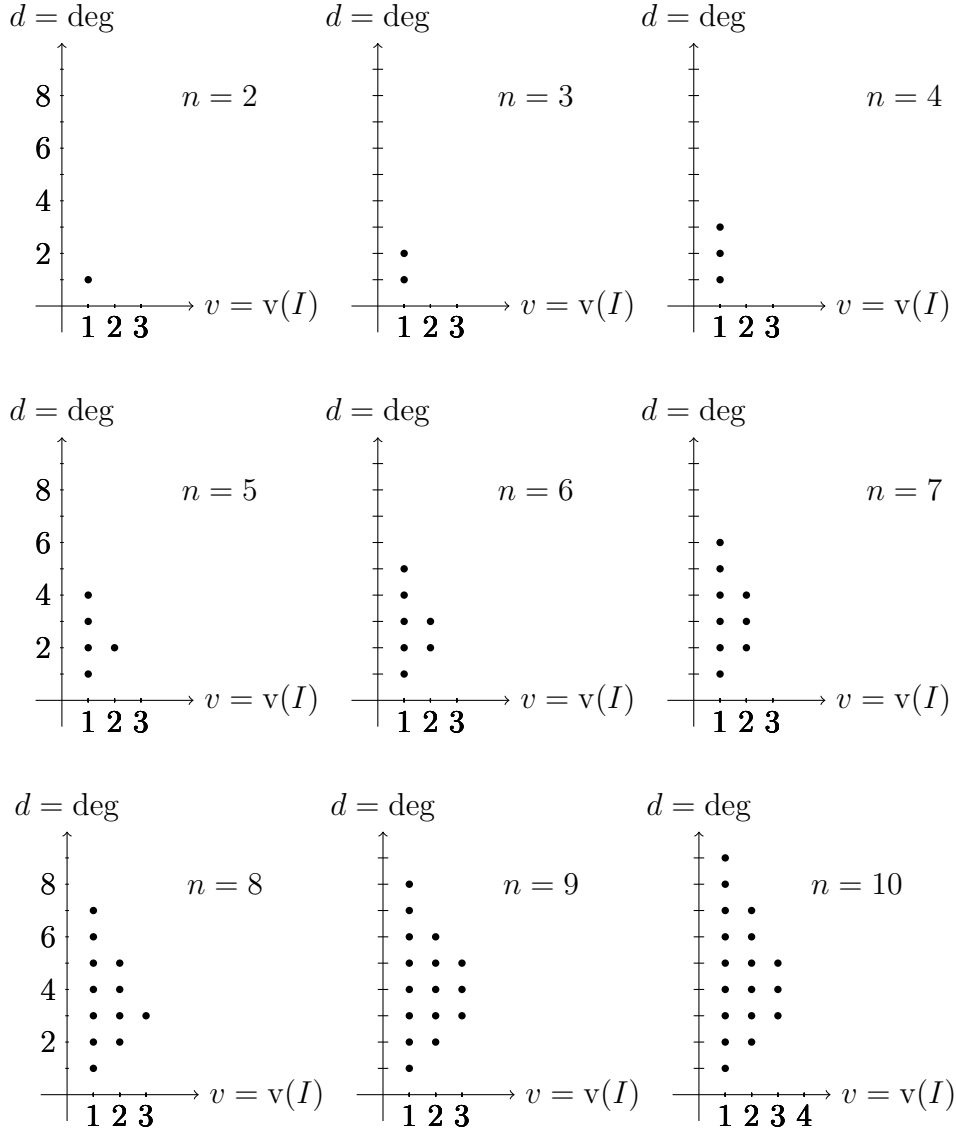
\newpage
\newpage

\section*{Appendix B: Macaulay2 code}

Below is {\it Macaulay2} code for the graphs of \Cref{fig:11vertex}
and \Cref{fig:secondminex}. We present this code here so that the reader can 
cut-and-paste these graphs into their own projects. We have also included the commands for computing the v-number and the reduced Hilbert
series.
\footnotesize
\begin{verbatim}
loadPackage "CodingTheory"
R:=QQ[x_1..x_11]

--Edge ideals of graphs with v-number = 3, deg h-poly = 2
i=monomialIdeal(x_1*x_2,x_1*x_6,x_1*x_7,x_1*x_5,
    x_2*x_8,x_2*x_10,x_2*x_9,x_2*x_5,x_3*x_5,x_3*x_4,
    x_3*x_10,x_3*x_11,x_4*x_5,x_4*x_6,x_4*x_8,x_4*x_9,
    x_6*x_7,x_6*x_8,x_6*x_10,x_7*x_9,x_7*x_11,x_8*x_9,
    x_8*x_10,x_9*x_11,x_10*x_11)

j=monomialIdeal(x_1*x_4,x_1*x_5,x_1*x_8,x_1*x_9,
    x_2*x_5,x_2*x_6,x_2*x_8,x_2*x_10,x_2*x_11,
    x_3*x_6,x_3*x_9,x_3*x_7,x_3*x_10,
    x_4*x_7,x_4*x_8,x_4*x_11,x_5*x_9,x_5*x_10,
    x_5*x_11,x_6*x_8,x_6*x_9,x_6*x_11,x_7*x_10,
    x_7*x_11,x_9*x_11)
    
vNumber(i)
vNumber(j)
s = hilbertSeries(i)
t = hilbertSeries(j)
reduceHilbert s
reduceHilbert t
\end{verbatim}
\normalsize
\subsection*{Acknowledgments}
Part of this work was made possible by the facilities of the Shared Hierarchical Academic Research Computing Network ({\tt SHARCNET:www.sharcnet.ca}) and Digital Research Alliance of Canada ({\tt https://alliancecan.ca/en}).
In addition, we made use of {\it Macaulay2} \cite{M2} and the packages
{\tt CodingTheory} \cite{CTM2} and {\tt Nauty} \cite{nauty}. Van Tuyl's research was supported in part by NSERC Discovery Grant 2024--05299. He  would also like to thank the hospitality of the
Fields Institute in Toronto, Canada.  

\bibliographystyle{amsplain}
\bibliography{sample}

@article {ssvmon22,
    AUTHOR = {Saha, Kamalesh and Sengupta, Indranath},
     TITLE = {The {${v}$}-number of monomial ideals},
   JOURNAL = {J. Algebraic Combin.},
  FJOURNAL = {Journal of Algebraic Combinatorics. An International Journal},
    VOLUME = {56},
      YEAR = {2022},
    NUMBER = {3},
     PAGES = {903--927},
       DOI = {10.1007/s10801-022-01137-y},
       URL = {https://doi.org/10.1007/s10801-022-01137-y},
}

@article {civan23,
AUTHOR = {Civan, Yusuf},
     TITLE = {The {$v$}-number and {C}astelnuovo-{M}umford regularity of
              graphs},
   JOURNAL = {J. Algebraic Combin.},
  FJOURNAL = {Journal of Algebraic Combinatorics. An International Journal},
    VOLUME = {57},
      YEAR = {2023},
    NUMBER = {1},
     PAGES = {161--169},
       DOI = {10.1007/s10801-022-01164-9},
       URL = {https://doi.org/10.1007/s10801-022-01164-9},
}

@article{sahacover23,
    AUTHOR = {Saha, Kamalesh},
     TITLE = {The v-number and {C}astelnuovo-{M}umford regularity of cover
              ideals of graphs},
   JOURNAL = {Int. Math. Res. Not. IMRN},
  FJOURNAL = {International Mathematics Research Notices. IMRN},
      YEAR = {2024},
    NUMBER = {11},
     PAGES = {9010--9019},
       DOI = {10.1093/imrn/rnad277},
       URL = {https://doi.org/10.1093/imrn/rnad277},
}

@article{cstpv20,
AUTHOR = {Cooper, Susan M. and Seceleanu, Alexandra and 
          Tohuaneanu, Stefan O. and Pinto, Maria Vaz and Villarreal, Rafael H.},
     TITLE = {Generalized minimum distance functions and algebraic
              invariants of {G}eramita ideals},
   JOURNAL = {Adv. in Appl. Math.},
  FJOURNAL = {Advances in Applied Mathematics},
    VOLUME = {112},
      YEAR = {2020},
     PAGES = {101940, 34},
       DOI = {10.1016/j.aam.2019.101940},
       URL = {https://doi.org/10.1016/j.aam.2019.101940},
}

@article{ksvgor23,
   AUTHOR = {Saha, Kamalesh and Kotal, Nirmal},
     TITLE = {On the v-number of {G}orenstein ideals and {F}robenius powers},
   JOURNAL = {Bull. Malays. Math. Sci. Soc.},
  FJOURNAL = {Bulletin of the Malaysian Mathematical Sciences Society},
    VOLUME = {47},
      YEAR = {2024},
    NUMBER = {6},
     PAGES = {Paper No. 167, 17},
       DOI = {10.1007/s40840-024-01763-8},
       URL = {https://doi.org/10.1007/s40840-024-01763-8},
}

@article{concav23,
 AUTHOR = {Conca, Aldo},
     TITLE = {A note on the {$v$}-invariant},
   JOURNAL = {Proc. Amer. Math. Soc.},
  FJOURNAL = {Proceedings of the American Mathematical Society},
    VOLUME = {152},
      YEAR = {2024},
    NUMBER = {6},
     PAGES = {2349--2351},
       DOI = {10.1090/proc/16767},
       URL = {https://doi.org/10.1090/proc/16767},
}

@article{kns25,
    AUTHOR = {Kumar, Manohar and Nanduri, Ramakrishna and Saha, Kamalesh},
     TITLE = {The slope of the {${\rm v}$}-function and the {W}aldschmidt
              constant},
   JOURNAL = {J. Pure Appl. Algebra},
  FJOURNAL = {Journal of Pure and Applied Algebra},
    VOLUME = {229},
      YEAR = {2025},
    NUMBER = {2},
     PAGES = {Paper No. 107881, 13},
       DOI = {10.1016/j.jpaa.2025.107881},
       URL = {https://doi.org/10.1016/j.jpaa.2025.107881},
}

@Article{grv21,
AUTHOR = {Grisalde, Gonzalo and Reyes, Enrique and Villarreal, Rafael H.},
TITLE = {Induced Matchings and the v-Number of Graded Ideals},
JOURNAL = {Mathematics},
VOLUME = {9},
YEAR = {2021},
NUMBER = {Paper No. 22},
Pages = {2860}
}

@article{djs25,
 AUTHOR = {Dey, Deblina and Jayanthan, A. V. and Saha, Kamalesh},
     TITLE = {On the {${\rm v}$}-number of binomial edge ideals of some
              classes of graphs},
   JOURNAL = {Internat. J. Algebra Comput.},
  FJOURNAL = {International Journal of Algebra and Computation},
    VOLUME = {35},
      YEAR = {2025},
    NUMBER = {1},
     PAGES = {119--143},
       DOI = {10.1142/S0218196724500607},
       URL = {https://doi.org/10.1142/S0218196724500607},
}

@article{v-edge,
 AUTHOR = {Jaramillo, Delio and Villarreal, Rafael H.},
     TITLE = {The v-number of edge ideals},
   JOURNAL = {J. Combin. Theory Ser. A},
  FJOURNAL = {Journal of Combinatorial Theory. Series A},
    VOLUME = {177},
      YEAR = {2021},
     PAGES = {Paper No. 105310, 35},
       DOI = {10.1016/j.jcta.2020.105310},
       URL = {https://doi.org/10.1016/j.jcta.2020.105310},
}

@article{ass23,
    AUTHOR = {Ambhore, Siddhi Balu and Saha, Kamalesh and Sengupta,
              Indranath},
     TITLE = {The {${\rm v}$}-number of binomial edge ideals},
   JOURNAL = {Acta Math. Vietnam.},
  FJOURNAL = {Acta Mathematica Vietnamica},
    VOLUME = {49},
      YEAR = {2024},
    NUMBER = {4},
     PAGES = {611--628},
      DOI = {10.1007/s40306-024-00540-w},
       URL = {https://doi.org/10.1007/s40306-024-00540-w},
}

@article{js24_v-binom,
AUTHOR = {Jaramillo-Velez, Delio and Seccia, Lisa},
     TITLE = {Connected domination in graphs and v-numbers of binomial edge
              ideals},
   JOURNAL = {Collect. Math.},
  FJOURNAL = {Collectanea Mathematica},
    VOLUME = {75},
      YEAR = {2024},
    NUMBER = {3},
     PAGES = {771--793},
       DOI = {10.1007/s13348-023-00412-w},
       URL = {https://doi.org/10.1007/s13348-023-00412-w},
}

@article{hmv19,
  AUTHOR = {Hibi, Takayuki and Matsuda, Kazunori and Van Tuyl, Adam},
 TITLE = {Regularity and {$h$}-polynomials of edge ideals},
   JOURNAL = {Electron. J. Combin.},
  FJOURNAL = {Electronic Journal of Combinatorics},
    VOLUME = {26},
      YEAR = {2019},
    NUMBER = {1},
     PAGES = {Paper No. 1.22, 11},
       DOI = {10.37236/8247},
       URL = {https://doi.org/10.37236/8247},
}

@article{havan08,
 AUTHOR = {H\`a, Huy T\`ai and Van Tuyl, Adam},
     TITLE = {Monomial ideals, edge ideals of hypergraphs, and their graded
              {B}etti numbers},
   JOURNAL = {J. Algebraic Combin.},
  FJOURNAL = {Journal of Algebraic Combinatorics. An International Journal},
    VOLUME = {27},
      YEAR = {2008},
    NUMBER = {2},
     PAGES = {215--245},
       DOI = {10.1007/s10801-007-0079-y},
       URL = {https://doi.org/10.1007/s10801-007-0079-y},
}

@article{ht10,
 AUTHOR = {Hoa, Le Tuan and Tam, Nguyen Duc},
     TITLE = {On some invariants of a mixed product of ideals},
   JOURNAL = {Arch. Math. (Basel)},
  FJOURNAL = {Archiv der Mathematik},
    VOLUME = {94},
      YEAR = {2010},
    NUMBER = {4},
     PAGES = {327--337},
       DOI = {10.1007/s00013-010-0112-6},
       URL = {https://doi.org/10.1007/s00013-010-0112-6},
}

@article {dhs13,
    AUTHOR = {Dao, Hailong and Huneke, Craig and Schweig, Jay},
     TITLE = {Bounds on the regularity and projective dimension of ideals
              associated to graphs},
   JOURNAL = {J. Algebraic Combin.},
  FJOURNAL = {Journal of Algebraic Combinatorics. An International Journal},
    VOLUME = {38},
      YEAR = {2013},
    NUMBER = {1},
     PAGES = {37--55},
       DOI = {10.1007/s10801-012-0391-z},
       URL = {https://doi.org/10.1007/s10801-012-0391-z},
}

@book {stanley96,
    AUTHOR = {Stanley, Richard P.},
     TITLE = {Combinatorics and commutative algebra},
    SERIES = {Progress in Mathematics},
    VOLUME = {41},
   EDITION = {Second},
 PUBLISHER = {Birkh\"auser Boston, Inc., Boston, MA},
      YEAR = {1996},
     PAGES = {x+164},
}

@article{M2,
          author = {Grayson, Daniel R. and Stillman, Michael E.},
          title = {Macaulay2, a software system for research in algebraic geometry},
          eprint = {http://www2.macaulay2.com}
        }

@article{bkoss24,
      title={Degree of $h$-polynomials of edge ideals}, 
      author={Jennifer Biermann and Selvi Kara and Augustine O'Keefe and Joseph Skelton and Gabriel Sosa},
    journal={Preprint},
    pages = {{\tt arXiv:2408.12544}},
      year={2024} 
}

@article{fd24, 
    title={ON THE ASYMPTOTIC BEHAVIOR OF THE {V}ASCONCELOS INVARIANT FOR GRADED MODULES},
    author={Fiorindo, Luca and Ghosh, Dipankar},
    Journal ={Nagoya Math. J.},
    fjournal={Nagoya Mathematical Journal},
    year={2025},
    pages={1--15},
}

@article {gp25,
    AUTHOR = {Ghosh, Dipankar and Pramanik, Siddhartha},
     TITLE = {Asymptotic v-numbers of graded (co)homology modules involving
              powers of an ideal},
   JOURNAL = {J. Algebra},
  FJOURNAL = {Journal of Algebra},
    VOLUME = {671},
      YEAR = {2025},
     PAGES = {61--74},
}

@article{saha24_binomexpan,
      title={Binomial expansion and the $\mathrm{v}$-number}, 
      author={Kamalesh Saha},
    JOURNAL = {To appear in Commun. Algebra},
      YEAR = {2025},
    eprint = {https://arxiv.org/abs/2406.05567}
}

@article{bm23,
      title={A study of ${\textrm{v}}$-number for some monomial ideals}, 
      author={Prativa Biswas and Mousumi Mandal},
      journal={Collect. Math.},
    fjournal={Collectanea Mathematica},
      year={2024},
    eprint={https://doi.org/10.1007/s13348-024-00451-x}
}

@article{as24,
      title={v-Numbers of symbolic power filtrations}, 
      author={A. Vanmathi and Parangama Sarkar},
    journal={Collect. Math.},
      fjournal = {Collectanea Mathematica},
      year={2025},
    eprint={https://doi.org/10.1007/s13348-025-00476-w}
}

@article{bms24,
      title={Asymptotic behaviour and stability index of v-numbers of graded ideals}, 
      author={Prativa Biswas and Mousumi Mandal and Kamalesh Saha},
      journal={Preprint},
      pages= {{\tt arXiv:2402.16583}},
      year={2024}
}

@article{kmt25,
      title={The v-numbers of {S}tanley-{R}eisner ideals from the viewpoint of {A}lexander dual complexes}, 
      author={Tatsuya Kataoka and Yuji Muta and Naoki Terai},
    journal={Preprint},
    pages= {{\tt arXiv:2504.07535}},
      year={2025}
}

@article{fs23,
      title={Asymptotic behaviour of the $\text{v}$-number of homogeneous ideals}, 
      author={Antonino Ficarra and Emanuele Sgroi},
    journal={Preprint},
    pages={{\tt arxiv:2306.14243}},
      year={2023}
}

@article{fs24,
author = {Ficarra, Antonino and Sgroi, Emanuele},
title = {Asymptotic behavior of integer programming and the v-function of a graded filtration},
journal={J. Algebra Appl.},
fjournal = {Journal of Algebra and Its Applications},
year = {2025},
eprint = {https://doi.org/10.1142/S0219498826502361}
}

@article {CTM2,
    AUTHOR = {Ball, Taylor and Camps, Eduardo and Chimal-Dzul, Henry and
              Jaramillo-Velez, Delio and L\'opez, Hiram and Nichols, Nathan
              and Perkins, Matthew and Soprunov, Ivan and Vera-Mart\'inez,
              German and Whieldon, Gwyn},
     TITLE = {Coding theory package for {M}acaulay2},
   JOURNAL = {J. Softw. Algebra Geom.},
  FJOURNAL = {Journal of Software for Algebra and Geometry},
    VOLUME = {11},
      YEAR = {2021},
    NUMBER = {1},
     PAGES = {113--122},
       DOI = {10.2140/jsag.2021.11.113},
       URL = {https://doi.org/10.2140/jsag.2021.11.113},
}

@book {CHHVT,
    AUTHOR = {Carlini, Enrico and H\`a, Huy T\`ai and Harbourne, Brian and
              Van Tuyl, Adam},
     TITLE = {Ideals of powers and powers of ideals},
    SERIES = {Lecture Notes of the Unione Matematica Italiana},
    VOLUME = {27},
 PUBLISHER = {Springer, Cham},
      YEAR = {2020},
     PAGES = {xix+159},
       DOI = {10.1007/978-3-030-45247-6},
       URL = {https://doi.org/10.1007/978-3-030-45247-6},
}

@article {hm2018,
    AUTHOR = {Hibi, Takayuki and Matsuda, Kazunori},
     TITLE = {Regularity and {$h$}-polynomials of monomial ideals},
   JOURNAL = {Math. Nachr.},
  FJOURNAL = {Mathematische Nachrichten},
    VOLUME = {291},
      YEAR = {2018},
    NUMBER = {16},
     PAGES = {2427--2434},
       DOI = {10.1002/mana.201700476},
       URL = {https://doi.org/10.1002/mana.201700476},
}

@article {bvt2023,
    AUTHOR = {Bhaskara, Kieran and Van Tuyl, Adam},
     TITLE = {Comparing invariants of toric ideals of bipartite graphs},
   JOURNAL = {Proc. Amer. Math. Soc. Ser. B},
  FJOURNAL = {Proceedings of the American Mathematical Society. Series B},
    VOLUME = {10},
      YEAR = {2023},
     PAGES = {219--232},
       DOI = {10.1090/bproc/174},
       URL = {https://doi.org/10.1090/bproc/174},
}

@article {fkvt2020,
    AUTHOR = {Favacchio, Giuseppe and Keiper, Graham and Van Tuyl, Adam},
     TITLE = {Regularity and {$h$}-polynomials of toric ideals of graphs},
   JOURNAL = {Proc. Amer. Math. Soc.},
  FJOURNAL = {Proceedings of the American Mathematical Society},
    VOLUME = {148},
      YEAR = {2020},
    NUMBER = {11},
     PAGES = {4665--4677},
       DOI = {10.1090/proc/15126},
       URL = {https://doi.org/10.1090/proc/15126},
}

@article {nauty,
    AUTHOR = {Cook, II, David},
     TITLE = {Nauty in {M}acaulay2},
   JOURNAL = {J. Softw. Algebra Geom.},
  FJOURNAL = {The Journal of Software for Algebra and Geometry},
    VOLUME = {3},
      YEAR = {2011},
     PAGES = {1--4},
       DOI = {10.2140/jsag.2011.3.1},
       URL = {https://doi.org/10.2140/jsag.2011.3.1},
}

@article {hkkmvt2021,
    AUTHOR = {Hibi, Takayuki and Kanno, Hiroju and Kimura, Kyouko and
              Matsuda, Kazunori and Van Tuyl, Adam},
     TITLE = {Homological invariants of {C}ameron-{W}alker graphs},
   JOURNAL = {Trans. Amer. Math. Soc.},
  FJOURNAL = {Transactions of the American Mathematical Society},
    VOLUME = {374},
      YEAR = {2021},
    NUMBER = {9},
     PAGES = {6559--6582},
       DOI = {10.1090/tran/8416},
       URL = {https://doi.org/10.1090/tran/8416},
}

@article{hm2022,
    AUTHOR = {Hibi, Takayuki and Matsuda, Kazunori},
     TITLE = {Regularity and {$h$}-polynomials of binomial edge ideals},
   JOURNAL = {Acta Math. Vietnam.},
  FJOURNAL = {Acta Mathematica Vietnamica},
    VOLUME = {47},
      YEAR = {2022},
    NUMBER = {1},
     PAGES = {369--374},
       DOI = {10.1007/s40306-021-00416-3},
       URL = {https://doi.org/10.1007/s40306-021-00416-3},
}

@article {hkmvt2022,
    AUTHOR = {Hibi, Takayuki and Kimura, Kyouko and Matsuda, Kazunori and
              Van Tuyl, Adam},
     TITLE = {The regularity and {$h$}-polynomial of {C}ameron-{W}alker
              graphs},
   JOURNAL = {Enumer. Comb. Appl.},
  FJOURNAL = {Enumerative Combinatorics and Applications},
    VOLUME = {2},
      YEAR = {2022},
    NUMBER = {3},
     PAGES = {Paper No. S2R17, 12},
       DOI = {10.54550/eca2022v2s3r17},
       URL = {https://doi.org/10.54550/eca2022v2s3r17},
}

\end{document}